\definecolor{webgreen}{rgb}{0,.5,0}
\definecolor{webbrown}{rgb}{.8,0,0}
\definecolor{emphcolor}{rgb}{0.95,0.95,0.95}
\ifpdf \hypersetup{pdftex,
            pdfstartview=FitH, 
            bookmarksopen=true,
            bookmarksnumbered=true
} \else \hypersetup{dvips} \fi
\newtheorem{theorem}{Theorem}[section]
\newtheorem{proposition}{Proposition}[section]
\newtheorem{corollary}{Corollary}[section]
\newtheorem{definition}{Definition}[section]
\newtheorem{example}{Example}[section]
\newtheorem{remark}{Remark}[section]
\title{Worst portfolios for dynamic monetary utility processes}
\thanks{This version: \today. }
\author[D.\ Hern\'andez-Hern\'andez]{Daniel Hern\'andez-Hern\'andez}
\address[D.\ Hern\'andez-Hern\'andez]{Department of Probability and Statistics,
Centro de Investigaci\'on en Matem\'aticas, Apartado Postal 402, Guanajuato GTO 36000, Mexico.}
\email{dher@cimat.mx}
\author[O.H. Madrid-Padilla]{Oscar Hernan Madrid-Padilla}
\address[O. H. Madrid-Padilla]{Department of Statistics and Data Sciences, The University of Texas, Austin}
\email{oscar.madrid@utexas.edu }
\date{}
\begin{document}

\maketitle

\begin{abstract}
We study the worst portfolios
for a class of law invariant dynamic monetary utility functions with
domain in a class of stochastic processes.  The concept
of comonotonicity is introduced for these processes in order  to prove the existence of
worst portfolios. Using robust representations   of   monetary
utility function processes in discrete time,   a relation
between the worst portfolios at different periods of time is presented. Finally, we study conditions
to achieve the maximum  in the representation theorems for concave monetary utility functions
that are continuous for bounded decreasing sequences. 
\end{abstract}

{\noindent \small{\textbf{Keywords:}\,\text{Comonotonicity, Best portfolio, Monetary utility function
process, Time-consistency, Relevancy}

\noindent \small{\textbf{Mathematics Subject Classification (2010):}\, 91B30, 91B16

\section{ Introduction}


In this paper, we present a definition  of  worst case portfolios for insurance versions of dynamic monetary utility functions, borrowing  the notion of insurance version from Ruschendorf \cite{ruschendorf2012worst}. This definition extends the ideas of worst case portfolios presented in \cite{ruschendorf2012worst}  to the dynamic  framework of  monetary utility functions studied in \cite{cheridito2006dynamic}.  Within the discrete time  framework, given an agent  interested in measuring the maximum risk,  we study conditions under which a portfolio preserves the property of worst case portfolio along the time.

  Monetary utility functions can be built through risk measures, and vice versa. When the aim is to look
at   worst case scenarios,   risk measures are used to quantify the risk, while  when the best
possible performances are desired   utility functions are employed.
The axiomatic notion of monetary risk measure was first introduced
by Artzner et. al.  \cite{artzner1999coherent,artzner1997a1} , and has been extensively
studied since then in different directions. The general framework
defined by Follmer and Schied \cite{follmer2011stochastic} in terms of convex functionals
has also been analyzed in a dynamic setting, providing a systematic
axiomatic approach to time-consistent monetary risk measures; see,
for instance, Cheridito et. al. \cite{cheridito2006dynamic}, Cheridito and Kupper \cite{cheridito2011composition},
and the recent work of Vioglio et. al. \cite{cerreia2011risk}. The research work
developed on this area has led to the general definition of conditional
monetary risk measure \cite{cheridito2006dynamic}, where the concept of monetary
utility process is also defined. Under a different perspective, conditional
monetary functions have also been studied by Filipovic et.al \cite{filipovic2012approaches}.
On the other hand, from the regulators point of view, they are only
interested in the amount and the intensity of the risk, and not in
its operational nature, which has motivated the study of law invariant
risk measures \cite{danielsson2001academic}, \cite{ekeland2012comonotonic}, \cite{ruschendorf2006law}. The analysis of these
monetary risk measures that are continuous from below as well as their
representation can be found in \cite{follmer2011stochastic}, for one period of time.

The problem of identifying the worst case dependence structure of
a d-dimensional portfolio has been analyzed by Ruschendorf \cite{ruschendorf2012worst}.
In this paper we relate this concept with conditional monetary risk
measures. Understanding the structure of worst case portfolios associated
to an specific monetary risk measure is important in order to calculate the
aggregated risk of a given portfolio \cite{burgert2006consistent},  \cite{ruschendorf2002n}.
As it is explained by Embrechts et.al.  \cite{embrechts2013model}, typically only the marginal distributions
are available and new techniques have to be developed to calculate
the most conservative values of the associated risk; for an interesting
connection of this problem with the mass transportation problem we refer the reader
to \cite{rachev1998mass}.  

In this work we are interested in measuring the average risk of an n-dimensional vector of financial positions evolving in
	time; each financial position is modeled as a stochastic process.
	One of our goals is to study properties of portfolios that are worst in the sense of having maximum risk. From a practical  point of view these type of portfolios are important, since allow us to quantify the riskiest situation for aggregated positions. Our aim is to study optimal portfolios in aggregated sense, with respect to fixed marginals of the different stochastic process involved.  More precisely, the initial aim of this paper is to describe the worst
case portfolios of insurance versions corresponding to  law invariant conditional monetary utility functions
that are continuous for bounded decreasing sequences. In that direction,
we present a new definition of comonotonicity, which is fundamental
in the proof of the main results presented in this paper; this concept was also essential in the
work of Ekeland et. al. \cite{ekeland2012comonotonic}. The first result that we present, Theorem \ref{Thm3.1}, establishes that in order
to study worst case portfolios, we should
understand the comonotones portfolios associated with the worst
scenarios of the average risk function. This is a generalization of
Theorem 3.2 in \cite{ruschendorf2012worst}. Then, we make a transition to  study 
the worst case portfolios of insurance versions of discrete-time conditional monetary utility
function processes. Before proving our main result an invariance
property of being worst case portfolio in three-time steps is established,
for certain class of insurance versions associated with time-consistent monetary utility function processes.
The main result of this work, stated as  Theorem \ref{MainThm}, provides conditions on the insurance version of a monetary
utility function process in order to  guarantee that the property of being
a worst case portfolio does not change over time. In the final part
of this paper, we are interested in  establishing conditions for
reaching the maximum in  the dual Fenchel representation formulas for the insurance version of the monetary utility function; see, for instance, 
Cheridito et.al. \cite{cheridito2006dynamic}. This is used  latter within our study of worst portfolios.

The structure of the paper is the following. Section 2 is dedicated
to introduce the notation and spaces needed throughout the paper,
while in Section 3 we begin defining a new concept of comonotonicity
which is based on conditional expectation. Moreover, we provide a study
of some aspects of worst case portfolios of insurance versions of monetary utility function
processes. Although Theorem \ref{Thm3.2} is interesting by itself, the main
result of that section, and of this paper, is Theorem \ref{MainThm}. Finally,
some results related with dual representations are presented in Section
4, providing conditions to attain the maximum in such representations.

\section{ Preliminaries}

Throughout the remainder $\left(\Omega,\mathcal{F},\left(\mathcal{F}_{t}\right)_{t\in\mathbb{N}},P\right)$
is a filtered probability space, with $\mathcal{F}_{0}$ $=$ $\left\{ \emptyset,\Omega\right\} $.
All equalities and inequalities between random variables or stochastic
processes are understood in the $P-$almost sure sense even without
explicit mention. Also, $T$ is a fixed deterministic finite time
horizon in $\mathbb{N}$. Denoting by $E$ the expectation operator
with respect to $P$ we introduce the following spaces and operators,
which are important to give the precise definitions.

$\;$
\begin{itemize}
\item The space of $\{\mathcal{F}_{t}\}-$ adapted stochastic processes
is denoted by $\mathcal{R}^{0},$ and define the subclass

\[
\mathcal{R}^{\infty}:=\left\{ X\in\mathcal{R}^{0}:\,\parallel X\parallel_{\mathcal{R}^{\infty}}<\infty\right\} ,
\]

with $\parallel X\parallel_{\mathcal{R}^{\infty}}:=\inf\left\{ m\in\mathbb{R}:\sup_{t\in\mathbb{N}}\mid X_{t}\mid\,\leq m\right\} $.

\item $A^{1}:=\left\{ a\in\mathcal{R}^{0}:\,\parallel a\parallel_{A^{1}}\,<\infty\right\} $,
where $a_{-1}=0,\quad\triangle a_{t}:=a_{t}-a_{t-1},\; t\in\mathbb{N},\quad \text{and}\quad\parallel a\parallel_{A^{1}}:=E\left(\sum_{t\in\mathbb{N}}\mid\triangle a_{t}\mid\right)$.
\item $A_{+}^{1}:=\left\{ a\in A^{1}\mid\triangle a_{t}\geq0\; \text{for\; all}\; t\in\mathbb{N}\right\} $,
and the bilineal form $\prec.,.\succ$ defined on $\mathcal{R}^{\infty}$$\times$
$A^{1}$ is given by

\textsf{
\[
\prec X,a\succ\,:=E\left(\sum_{t\in\mathbb{N}}X_{t}\triangle a_{t}\right).
\]
}

\item The space $\mathcal{R}^{\infty}$ is endowed with the weak
topology $\sigma$$\left(\mathcal{R}^{\infty},A^{1}\right)$, that
makes all the linear functionals $X \to  \langle X, a\rangle,\; a\in A^1$, continuous,  and
analogously, $\sigma\left(A^{1},\mathcal{R}^{\infty}\right)$ denotes
the weak topology on $A^{1}$. 
\item Given the $\{\mathcal{F}_{t}\}-$ stopping times $\tau$ and $\theta$
such that $0$ $\leq$ $\tau$ $<$ $\infty$ and $\tau$ $\leq$
$\theta$ $\leq$ $\infty$, the projection $\pi_{\tau,\theta}$ $:$
$\mathcal{R}^{0}$ $\longrightarrow$ $\mathcal{R}^{0}$ is given
by $\pi_{\tau,\theta}\left(X\right)_{t}:=1_{\left\{ \tau\leq t\right\} }X_{t\wedge\theta},\, t\in\mathbb{N}$.
Define the vector space $\mathcal{R}_{\tau,\theta}^{\infty}:=\pi_{\tau,\theta}\left(\mathcal{R}^{\infty}\right)$.
\item For $X$ $\in$ $\mathcal{R}^{\infty}$ and $a$ $\in$ $A^{1},$
let

\[
\prec X,a\succ_{\tau,\theta}\,:=E\left(\sum_{t\in\left[\tau,\theta\right]\cap\mathbb{N}}X_{t}\triangle a_{t}\mid\mathcal{F}_{\tau}\right).
\]

\item Define the following subsets of $A^{1}:$

\[
A_{\tau,\theta}^{1}:=\pi_{\tau,\theta}A^{1},\quad\left(A_{\tau,\theta}^{1}\right)_{+}:=\pi_{\tau,\theta}A_{+}^{1},\quad D_{\tau,\theta}:=\left\{ a\in\left(A_{\tau,\theta}^{1}\right)_{+}\mid\prec1,a\succ_{\tau,\theta}\,=1\right\} .
\]

Finally, let $D_{\tau,\theta}^{e}:=\left\{ a\in D_{\tau,\theta}\mid P\left(\sum_{j\geq t\wedge\theta}\bigtriangleup a_{j}>0\right)=1\: \text{for\: all}\: t\in\mathbb{N}\right\} $.

\end{itemize}

Now let us recall the definition of a monetary utility function in
the static framework.

\begin{definition}
A mapping $\phi$ : $\mathcal{R}_{\tau,\theta}^{\infty}$ $\longrightarrow$
$L^{\infty}\left(\mathcal{F}_{\tau}\right)$ is a monetary utility
function on $\mathcal{R}_{\tau,\theta}^{\infty}$ if the following
three properties hold:\\
\\
$\left(0\right)$ $\phi\left(1_{A}X\right)$
$=$ $1_{A}\phi\left(X\right)$ for all $X$ $\in$ $\mathcal{R}_{\tau,\theta}^{\infty}$
and $A$ $\in$ $\mathcal{F}_{\tau}$.\\
\\
$\left(1\right)$ $\phi\left(X\right)$ $\leq$
$\phi\left(Y\right)$ for all $X$, $Y$ $\in$ $\mathcal{R}_{\tau,\theta}^{\infty}$
such that $X$ $\leq$ $Y.$\\
\\
$\left(2\right)$ $\phi\left(X+m1_{[\tau,\infty)}\right)$
$=$ $\phi\left(X\right)$ $+$ $m$ for all $X$ $\in$ $\mathcal{R}_{\tau,\theta}^{\infty}$
and $m$ $\in$ $L^{\infty}\left(\mathcal{F}_{\tau}\right).$
\\
\\
Such a mapping is said to be:\\
\\
$\left(3\right)$ Concave
if $\phi\left(\lambda X+\left(1-\lambda\right)Y\right)$ $\geq$ $\lambda\phi\left(X\right)$
$+$ $\left(1-\lambda\right)\phi\left(Y\right)$ for all $X,$ $Y$
$\in$ $\mathcal{R}_{\tau,\theta}^{\infty}$ and $\lambda$ $\in$
$L^{\infty}\left(\mathcal{F}_{\tau}\right)$ such that $0$ $\leq$
$\lambda$ $\leq$ $1$.\\
\\
 $\left(4\right)$  Coherent
if $\phi\left(\lambda X\right)$ $=$ $\lambda\phi\left(X\right)$
for all $X$ $\in$ $\mathcal{R}_{\tau,\theta}^{\infty}$ and $\lambda$
$\in$ $L_{+}^{\infty}\left(\mathcal{F}_{\tau}\right)$.\\
\\
$\left(5\right)$ Continuous
for bounded decreasing sequences if $\lim_{n\rightarrow\infty}$$\phi\left(X^{n}\right)$
$=$ $\phi\left(X\right)$  for every decreasing sequence $\left\{ X^{n}\right\} _{n\in\mathbb{N}}$
in $\mathcal{R}_{\tau,\theta}^{\infty}$ and $X$ $\in$ $\mathcal{R}_{\tau,\theta}^{\infty}$,
such that $X_{t}^{n}$ $\rightarrow$ $X_{t}$  for all $t$ $\in$
$\mathbb{N}$.\\
\\
$\left(6\right)$ $\theta-$ relevant if $A$ $\subset$ $\left\{ \phi\left(-\varepsilon1_{A}1_{[t\wedge\theta,\infty)}\right)<0\right\} $
for all $\varepsilon$ $>$ $0,$ $t$ $\in$ $\mathbb{N}$ and $A$
$\in$ $\mathcal{F}_{t\wedge\theta}.$
\end{definition}

The acceptance set of a monetary utility function $\phi$ is defined
as \begin{equation}\label{aceptance-set}\mathcal{C_{\phi}}=\left\{ X\in\mathcal{R}_{\tau,\theta}^{\infty}\mid\phi\left(X\right)\geq0\right\}. \end{equation}
\noindent
The notion of relevance introduced by Follmer and Schied \cite{follmer2011stochastic} captures the intuitive fact that if $X$ is a non--positive random variable
with positive probability of being negative, then the risk of $X$
is higher than that of the position identically zero. The analogous
version of this concept for $\mathcal{F_{\tau}}$ conditional monetary
utility functions was given by Cheridito et.al. \cite{cheridito2006dynamic}.

\begin{definition}
A mapping $\gamma$ from $D_{t,T}$ to the space of measurable functions
$f$ $:$ $\left(\Omega,\mathcal{F}_{t}\right)$ $\longrightarrow$
$\left[-\infty,0\right]$ is said to be a penalty function if
\[
{\rm ess}\; \sup_{a\in D_{t,T}}\gamma\left(a\right)=0.
\]
Such a function is called local if $\gamma\left(1_{A}a+1_{A^{c}}b\right)=1_{A}\gamma\left(a\right)+1_{A^{c}}\gamma\left(b\right)$
for all $a,$ $b$ $\in$ $D_{t,T}$ and $A$ $\in$ $\mathcal{F}_{t}.$
\end{definition}

\begin{remark}\label{Rem2.1}
  If $\phi$ is a monetary utility function, we naturally define the insurance version by $\Psi(\cdot)=-\phi(-(\cdot))$. It turns out that some times it is more convenient to work with the insurance version, and we often work with such functions instead of monetary utility functions. On the other hand, considering the negative of a monetary utility function, the result
is a mapping that generalizes the original definition of a monetary
risk measure \cite{follmer2011stochastic}, namely,  the negative of a monetary utility function $\rho(\cdot)=-\phi(\cdot)$ defines a monetary risk measure.  
\end{remark}
\begin{definition}
(a) Given $S$ $\in$ $\mathbb{N}$,
with $S$ $\leq$ $T$, $\left(\phi_{t,T}\right)_{t\in\left[S,T\right]\cap\mathbb{N}}$
is a monetary utility process if for each $t$
$\in$ $\left[S,T\right]$ $\cap$ $\mathbb{N}$, $\phi_{t,T}$
is a monetary utility function on $\mathcal{R}_{t,T}^{\infty}.$
When the properties of concavity, coherence, decreasing monotonicity
or relevancy are satisfied for  $\phi_{t,T}$  for each $t$
$\in$ $\left[S,T\right]$ $\cap$ $\mathbb{N}$, we say that the utility function process $\left(\phi_{t,T}\right)_{t\in\left[S,T\right]\cap\mathbb{N}}$ is concave, coherent, monotonically decreasing or relevant, respectively. If  $\tau$ is an $\left(\mathcal{F}_{t}\right)-$stopping
time, with $S\leq \tau\leq T$, we
define the mapping $\phi_{\tau,T}$$:$ $\mathcal{R}_{\tau,T}^{\infty}$
$\rightarrow$ $L^{\infty}\left(\mathcal{F_{\tau}}\right)$ by
\[
\phi_{\tau,T}\left(X\right):=\sum_{t\in\left[S,T\right]\cap\mathbb{N}}\phi_{t,T}\left(1_{\left\{ \tau=t\right\} }X\right).
\]
(b) Such a utility function process $\left(\phi_{t,T}\right)_{t\in\left[S,T\right]\cap\mathbb{N}}$
is time-consistent if
\[
\phi_{t,T}\left(X\right)=\phi_{t,T}\left(X1_{[t,\theta)}+\phi_{\theta,T}\left(X\right)1_{[\theta,\infty)}\right)
\]
for each $t$ $\in$$\left[S,T\right]$$\cap$ $\mathbb{N},$
every finite $\left(\mathcal{F}_{t}\right)-$stopping time $\theta$
such that $t$ $\leq$ $\theta$ $\leq$ $T$ and all process $X$$\in$
$\mathcal{R}_{t,T}^{\infty}.$
\end{definition}

\begin{example}
Given $\alpha>0$, let $u$ be the exponential
utility function

\[
u(x)=1-\exp\left(-\alpha x\right),\; x\in\mathbb{R}.
\]
The certainty equivalent of a probability measures of $\mathbb{R}$  or ``lottery'' $\mu$, is defined as the number $c\left(\mu\right)$ for which the identity

\[
u\left(c\left(\mu\right)\right)=U\left(\mu\right):=\int u(y)\mu(dy),
\]
is satisfied.

Let $t\in\left[0,T\right]\cap\mathbb{N}$ and define $U_{t}$ $:$
$L^{\infty}\left(\mathcal{F}_{T}\right)$ $\longrightarrow$ $L^{\infty}\left(\mathcal{F}_{t}\right)$
as

\[
U_{t}(Y)=E\left(u\left(Y\right)\mid\mathcal{F}_{t}\right).
\]
The function $C_{t}\left(Y\right)$$\in$ $L^{\infty}\left(\mathcal{F}_{t}\right)$
is named the certainty equivalent of $Y$ at time $t$ if

\[
u\left(C_{t}\left(Y\right)\right)=U_{t}\left(Y\right).
\]
It can be verified that

\[
C_{t}\left(Y\right)=-\frac{1}{\alpha}\log\, E\left[\exp\left\{ -\alpha Y\right\} \mid\mathcal{F}_{t}\right].
\]
Moreover, the utility function process $\left(\phi_{t,T}\right)_{t\in\left[0,T\right]\cap\mathbb{N}},$
with $\phi_{t,T}(X)$ $:=$ $C_{t}\left(X_{T}\right),$ $X$ $\in$
$\mathcal{R}_{t,T}^{\infty},$ is time-consistent \cite{cheridito2006dynamic}.
\end{example}

\section{Worst portfolios of conditional monetary utility
functions}\label{Section3}

In this section our aim is to analyze the relationship between the
notions of worst portfolio and conditional utility functions, extending to the dynamic case the results  established by Ruschendorf \cite{ruschendorf2012worst} for the static case between risk measures and worst portfolios. Given a risk
measure, the study of the joint distribution of vector-valued portfolios
evolving in time is crucial within the theory of Financial Mathematics,
while in practice, agents are enforced by regulators to quantify the
underlying risk associated with their positions, in order to take
appropriate decisions. Also, from a different perspective, risk measures
are useful to understand the endogenous effect of liquidity risk in economical
crisis; see, for instance, the discussion presented by Danielson et.al. \cite{danielsson2001academic}. Taking decisions using risk measures derives in some cases
in the analysis of worst portfolios \cite{ruschendorf2006law,ruschendorf2012worst}; in order
to describe worst d-dimensional portfolios we
employ max correlation of conditional monetary utility functionals, using conditional expectation. Recall that there is a bijective relation
between utility functions and risk measures (see Remark \ref{Rem2.1}),  and we shall formulate our results  in terms of worst portfolios, which are obtained by  quantifying the highest risk that  the investor  is facing; however, this quantification is restricted to a pre-specified set of marginal risks that  are averaged  to get  the overall risk.

Let $X$ $\in$ $\mathcal{R}_{t,T}^{\infty}$, and express it as a
random vector by $\left(X_{t},...,X_{T}\right)$; we write $X$ $\sim$
$X^{\prime}$ for $X^{\prime}$ $\in$ $\mathcal{R}_{t,T}^{\infty}$,
when $X$ and $X^{\prime}$ have the same distribution as random vectors.
Moreover,  we also define  the function $\phi_{t,T}^{\#}$ on $A_{t,T}^{1}$ given as

\[
\phi_{t,T}^{\#}\left(a\right):={\rm ess}\, \inf_{X\in\mathcal{C}_{\phi_{t,T}}}\prec X,a\succ_{t,T},
\]
with $\mathcal{C}_{\phi_{t,T}}$ defined as in (\ref{aceptance-set}). 



We now introduce the definition of comonotonicity for the dynamic case  in an analogous  manner to  \cite{ruschendorf2012worst} for the static case. Starting with a  finite set of stochastic processes $X^{1},\ldots,X^{n}\in\mathcal{R}_{t,T}^{\infty}$  which  represent the evolution in time  of financial values, the approach presented below aims to quantify the worst case performance  when   conditional maximal correlations between these financial positions evolving in time and conditional densities in 
the space $\Omega \times \mathbb{N}$ are computed. Throughout, $X^{1},\ldots,X^{n}$ remain fixed.
Below we shall show how the problem of calculating worst portfolios in terms of risk measures can be analyzed using comonotonicity.

\begin{definition}\label{Def3.1}
$\left(i\right)$ Let
$a$ $\in$ $D_{t,T}$ be fixed, and define the mapping $\Psi_{a}$
$:$ $\mathcal{R}_{t,T}^{\infty}$ $\longrightarrow$ $L^{\infty}\left(\mathcal{F}_{t}\right)$
as
\[
\Psi_{a}\left(\hat{X}\right)={\rm ess}\, \sup_{\tilde{X}\sim\hat{X}}\prec\tilde{X},a\succ_{t,T},\qquad\hat{X}\in\mathcal{R}_{t,T}^{\infty}.
\]

$\left(ii\right)$ The average risk function $F_{t,T}$ $:$ $D_{t,T}$$\longrightarrow$ $L^{\infty}\left(\mathcal{F}_{t}\right)$ is defined by

\[
F_{t,T}\left(a\right)\,=\,\frac{1}{n}\sum_{i=1}^{n}\Psi_{a}\left(X^{i}\right)+\phi_{t,T}^{\#}\left(a\right).
\]

$\left(iii\right)$ We call $a^{0}$$\in$ $D_{t,T}$ the worst scenario of $F_{t,T},$ if
\[
F_{t,T}\left(a^{0}\right)\,=\, {\rm ess}\, \sup_{a\in D_{t,T}}F_{t,T}\left(a\right).
\]

$\left(iv\right)$ If $a^{0}$$\in$ $D_{t,T}$ and
$\tilde{X}^{i}$ $\sim$ $X^{i},$ $\tilde{X}^{i}$ $\in$ $\mathcal{R}_{t,T}^{\infty}$,
$i$ $=$$1$,....,$n$, we call $\tilde{X}^{1}$,....,$\tilde{X}^{n}$
$a^{0}-comonotone$ when
\[
\Psi_{a^{0}}\left(\tilde{X}^{i}\right)\,=\,\prec\tilde{X}^{i},a^{0}\succ_{t,T},
\]
and

\[
\Psi_{a^{0}}\left(\sum_{i=1}^{n}\tilde{X}^{i}\right)\,=\,\prec\sum_{i=1}^{n}\tilde{X}^{i},a^{0}\succ_{t,T}.
\]
\end{definition}



Note that each function $\Psi_a$ can be interpreted as the maximum correlation between a financial position, constrained to have fixed marginals, and a conditional density $a$ in $\Omega \times \mathbb{ N}$. Moreover, Part (ii) in Definition 3.1 constructs an average between a maximum correlation function  $\Psi_a$ and different positions plus a penalty on the density $a$.   On the other hand, maximizing $F_{t,T}$ over the set of densities $D_{t,T}$ resembles the problem of finding  a density with maximum correlation with the average of the financial positions. The following remark illustrates the deep connection between the  definition of comonotonicity and densities of maximal correlation, with given financial positions.

\begin{remark}\label{Rem3.1}
Let us illustrate the notion
of comonotonicity with a simple example when $t$ $=$ $1$ and $T$
$=$ $2.$ Given $a$ $\in$ $D_{1,2}$, with $\bar{a}$ $=$ $\left(\triangle a_{1},\triangle a_{2}\right)$,
choose $X^{0}$ $\in$ $\mathcal{R}_{1,2}^{\infty}$ such that $X^{0}$
and $\bar{a}$ are square integrable random vectors and $E$$\left[\triangle a_{i}\right]$
$=$ $\frac{1}{2},$ for $i$ $=$ $1,$ $2$. If
\[
\Psi_{a}\left(X^{0}\right)\,=\,\prec X^{0},a\succ_{1,2},
\]
taking expectation and multiplying by two it follows that

\[
E\left[X^{0}\cdot(2\bar{a})\right]\,=\, \sup_{\tilde{X}\sim X^{0}}E\left[X\cdot\left(2\bar{a}\right)\right].
\]
Therefore, from \cite{ruschendorf2006law},

\[
E\left[X^{0}\cdot(2\bar{a})\right]\,=\, \sup_{\tilde{X}\sim X^{0},U\sim2\bar{a}}E\left[\tilde{X}\cdot U\right],
\]
and hence

\[
E\left[X^{0}\cdot\bar{a}\right]\,=\, \sup_{\tilde{X}\sim X^{0},U\sim\bar{a}}E\left[\tilde{X}\cdot U\right].
\]
By Theorem 1 in \cite{ruschendorf1990characterization}, we conclude that $X^{0}$ $\in$ $\partial f\left(\bar{a}\right)$,
for some convex lower semi-continuous function $f$. Hence, if $\tilde{X^{1}}$,....,$\tilde{X}^{n}$ are
 $a-comonotone$,  then  $\sum_{i=1}^{n}\tilde{X}^{i}$ $\in$ $\partial f\left(\bar{a}\right)$    and  $\tilde{X}^{i}$  $\in$ $\partial f\left(\bar{a}\right)$   for  $i = 1,...n$.
\end{remark}

Let us recall briefly  the main objective of this paper: The aim  is to propose  natural conditions under  which portfolios with maximum risk preserve such property over time. Ekeland, Galichon and Henry \cite{ekeland2012comonotonic}  considered law invariant, coherent and  lower semi-continuous  risk measures, and introduced strongly coherent measures.  The idea behind these concepts  was somehow    to prevent     unnecessary premium payments to conglomerates as well as to avoid imposing  over-conservative rules to the banks. We follow these ideas,  extending also the results of  Ruschendorf  \cite{ruschendorf2012worst},  defining worst portfolios as those which maximize the aggregated risk over the set of all possible portfolios with the same marginals. The condition imposed,  fixing  the marginals,   is a natural way of formalizing the notion that we are  concerned only  with the aggregate risk and not with its nature.  With this purpose in mind we present now the  definition of worst portfolios.


\begin{definition}\label{Def3.2}
Given a monetary utility function $\phi_{t,T}$  and  $\bar{X}^{i}\in \mathcal{R}_{t,T}^{\infty}$ with $\bar{X}^{i}$ $\sim$
$X^{i}$,  $i = 1,\ldots, n$, we say that   $\left(\bar{X}^{1},....,\bar{X}^{n}\right)$ is a worst case portfolio for the associated insurance version $\Psi_{t,T}$ if
\[
{\rm ess}\, \sup_{\tilde{X}^{i}\sim X^{i}}\Psi_{t,T}\left(\frac{1}{n}\sum_{i=1}^{n}\tilde{X}^{i}\right)\,=\,\Psi_{t,T}\left(\frac{1}{n}\sum_{i=1}^{n}\bar{X}^{i}\right).
\]

\end{definition}

When $n$ $=$ $1 $  the function $\Psi_{t,T}$ consists in  evaluating  a single financial position and the statement becomes trivial for law invariant measures, see the definition below. This is the  case of the widely used {\it value at risk} measure. For $n>1$, the definition of worst portfolios can be explained as selecting  portfolios with maximum risk,  recalling that the function $\Psi_{t,T}(-\cdot)$ is a risk measure. Thus, defining $\rho_{t,T} : \mathcal{R}_{t,T}^{\infty}$ $\longrightarrow$ $L^{\infty}\left(\mathcal{F}_{t}\right)$ as 
	\[
	    \rho_{t,T}(X) =  \Psi_{t,T}(-X),\,\,\,\,\text{ for each} \; X \in \mathcal{R}_{t,T}^{\infty},
	\]
	we have that $\left(\bar{X}^{1},....,\bar{X}^{n}\right)$ is a worst case portfolio for $\Psi_{t,T}$ if and only if 
	\[
	   {\rm ess}\, \sup_{\tilde{X}^{i}\sim -X^{i}}\rho_{t,T}\left(\frac{1}{n}\sum_{i=1}^{n}\tilde{X}^{i}\right)\,=\,\rho_{t,T}\left(\frac{1}{n}\sum_{i=1}^{n}-\bar{X}^{i}\right).
	\]
	This means  that, among all the portfolios with marginals specified by $-X^1,\ldots,-X^n$, the portfolio $(-\bar{X}^{1},\ldots,-\bar{X}^{n})$ has the highest risk with respect to the dynamic monetary risk measure $\rho_{t,T}$.


As it has been  already mentioned,  it is usual   that investors deal with monetary utility functions, or equivalently, monetary risk measures, that  depend only on the distribution of the portfolio process.   We now present  such  definition in our context.

\begin{definition}
We say that a monetary utility function  $\phi_{t,T}$  is law invariant if for all $X \sim X^{\prime}$ for $X$, $X^{\prime}$ $\in$  $\mathcal{R}_{t,T}^{\infty}$ the following holds
\[
    \phi_{t,T}\left(X\right) = \phi_{t,T} \left(X^{\prime}\right).
\]
\end{definition}

We are now ready to state one of  the main results of this section.  This result connects the two fundamental concepts of comonotonicity and worst case portfolios introduced before.
	In the first part, it is  shown that the value of the worst case portfolios is the same as the value of the average risk function,  introduced in Definition \ref{Def3.1}, at its worst scenario. As a consequence of this result, the second part states  sufficient  conditions for being a worst portfolio, in terms of the comonotonicity property. 
 

\begin{theorem}\label{Thm3.1}
Let $\phi_{t,T}$ be a  law invariant concave monetary utility process such that it is  continuous for bounded decreasing sequences, and denote by $\Psi_{t,T}$ its  insurance version. Then,
the following properties hold:

$\left(i\right)$

\[
{\rm ess}\, \sup_{\tilde{X}^{i}\sim X^{i}}\Psi_{t,T}\left(\frac{1}{n}\sum_{i=1}^{n}\tilde{X}^{i}\right)\,=\, {\rm ess}\, \sup_{a\in D_{t,T}}F_{t,T}\left(a\right).
\]

$\left(ii\right)$ If $\bar{X}^{1}$,....,$\bar{X}^{n}$
are $a^{0}-comonotone$, with $\bar{X}^{i}$ $\sim$ $X^{i}$ , and
$a^{0}$ is a worst scenario of $F_{t,T}$, then $\left(\bar{X}^{1},....,\bar{X}^{n}\right)$ is a worst case portfolio of $\Psi_{t,T}$.
\end{theorem}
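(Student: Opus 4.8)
The plan is to reduce both statements to the dual (robust) representation of the insurance version. The decisive first step is to record the Fenchel-type representation of a concave, law invariant monetary utility function that is continuous for bounded decreasing sequences; following Cheridito et al. \cite{cheridito2006dynamic} one has
\[
\phi_{t,T}(X)={\rm ess}\inf_{a\in D_{t,T}}\left(\prec X,a\succ_{t,T}-\phi_{t,T}^{\#}(a)\right),\qquad X\in\mathcal{R}_{t,T}^{\infty},
\]
where $\phi_{t,T}^{\#}$ is the minimal penalty function introduced before Definition \ref{Def3.1}. Writing $\Psi_{t,T}(\cdot)=-\phi_{t,T}(-\cdot)$ (Remark \ref{Rem2.1}) and using that $\prec\cdot,a\succ_{t,T}$ is linear in its first argument, this turns into
\[
\Psi_{t,T}(X)={\rm ess}\sup_{a\in D_{t,T}}\left(\prec X,a\succ_{t,T}+\phi_{t,T}^{\#}(a)\right).
\]
I expect the verification that continuity for bounded decreasing sequences is exactly the property making this dual representation valid (the dynamic analogue of the Fatou property) to be the main obstacle; everything downstream is manipulation of essential suprema.

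For part $(i)$, substitute $X=\frac{1}{n}\sum_{i=1}^{n}\tilde{X}^{i}$ into the representation and take ${\rm ess}\sup$ over $\tilde{X}^{i}\sim X^{i}$. Since nested essential suprema may be interchanged, I first move the supremum over the marginals inside the supremum over $a\in D_{t,T}$. For fixed $a$ the term $\phi_{t,T}^{\#}(a)$ is constant, the feasible set decouples into independent marginal constraints, and the objective is the separable sum $\frac{1}{n}\sum_{i}\prec\tilde{X}^{i},a\succ_{t,T}$, so that
\[
{\rm ess}\sup_{\tilde{X}^{i}\sim X^{i}}\frac{1}{n}\sum_{i=1}^{n}\prec\tilde{X}^{i},a\succ_{t,T}=\frac{1}{n}\sum_{i=1}^{n}\Psi_{a}(X^{i})
\]
by the definition of $\Psi_{a}$. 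Recombining gives $F_{t,T}(a)$ and the outer supremum over $a$ yields the claim. The delicate point is the $\ge$ inequality above: the bound $\le$ is immediate, while realizing the $n$ essential suprema $\Psi_{a}(X^{i})$ simultaneously by one tuple requires either the upward directedness of the families $\{\prec\tilde{X},a\succ_{t,T}:\tilde{X}\sim X^{i}\}$ or their attainment by an optimal coupling, a property of the max-correlation functional exploited in \cite{ruschendorf2012worst,ruschendorf2006law}.

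For part $(ii)$, part $(i)$ together with the hypothesis that $a^{0}$ is a worst scenario gives
\[
{\rm ess}\sup_{\tilde{X}^{i}\sim X^{i}}\Psi_{t,T}\left(\frac{1}{n}\sum_{i=1}^{n}\tilde{X}^{i}\right)={\rm ess}\sup_{a\in D_{t,T}}F_{t,T}(a)=F_{t,T}(a^{0}),
\]
so it remains to prove $\Psi_{t,T}(\frac{1}{n}\sum_{i}\bar{X}^{i})=F_{t,T}(a^{0})$. For the lower bound I would use that $\Psi_{a^{0}}$ depends only on the law, so $\Psi_{a^{0}}(X^{i})=\Psi_{a^{0}}(\bar{X}^{i})$, and then the $a^{0}$-comonotone identities $\Psi_{a^{0}}(\bar{X}^{i})=\prec\bar{X}^{i},a^{0}\succ_{t,T}$ to rewrite
\[
F_{t,T}(a^{0})=\prec\frac{1}{n}\sum_{i=1}^{n}\bar{X}^{i},a^{0}\succ_{t,T}+\phi_{t,T}^{\#}(a^{0}),
\]
which is the $a=a^{0}$ term in the representation of $\Psi_{t,T}(\frac{1}{n}\sum_{i}\bar{X}^{i})$ and is therefore $\le\Psi_{t,T}(\frac{1}{n}\sum_{i}\bar{X}^{i})$. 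For the reverse bound, $(\bar{X}^{1},\dots,\bar{X}^{n})$ is itself an admissible tuple, so $\Psi_{t,T}(\frac{1}{n}\sum_{i}\bar{X}^{i})\le F_{t,T}(a^{0})$ by the displayed identity of part $(i)$. Combining the two inequalities yields equality, which is exactly the worst case portfolio condition of Definition \ref{Def3.2}; note that only the marginal comonotone identities enter here, the aggregate identity serving as a consistency requirement.
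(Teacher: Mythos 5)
Your proof is correct and follows essentially the same route as the paper: both parts rest on the dual representation $\Psi_{t,T}(X)={\rm ess}\sup_{a\in D_{t,T}}\{\prec X,a\succ_{t,T}+\phi_{t,T}^{\#}(a)\}$ from Theorem 3.16 of \cite{cheridito2006dynamic}, part $(i)$ is obtained by interchanging the two essential suprema and decoupling the product constraint, and part $(ii)$ combines $(i)$ with the comonotone identities at the worst scenario $a^{0}$. The one place you diverge is the lower bound in $(ii)$: you plug the single element $a=a^{0}$ into the dual representation and use only the marginal identities $\Psi_{a^{0}}(\bar X^{i})=\prec\bar X^{i},a^{0}\succ_{t,T}$ together with linearity of $\prec\cdot,a^{0}\succ_{t,T}$, whereas the paper first uses law invariance to rewrite $\Psi_{t,T}(X)={\rm ess}\sup_{a}\{\Psi_{a}(X)+\phi_{t,T}^{\#}(a)\}$ and then invokes the aggregate identity $\Psi_{a^{0}}\left(\sum_{i}\bar X^{i}\right)=\prec\sum_{i}\bar X^{i},a^{0}\succ_{t,T}$ to split $\Psi_{a^{0}}$ of the average into the average of the $\Psi_{a^{0}}(\bar X^{i})$. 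Your version is slightly more economical --- the aggregate comonotone identity really is not needed for the conclusion, as you observe --- while the paper's version makes the role of the law-invariant max-correlation functional $\Psi_{a}$ explicit. The ``delicate point'' you flag in $(i)$, namely realizing the $n$ essential suprema simultaneously, is genuine but is exactly what the product structure of the constraint set $\{\tilde X^{i}\sim X^{i}\}_{i=1}^{n}$ delivers (via countable approximating sequences for each marginal family); the paper passes over this step silently.
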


\begin{proof}
From  Theorem 3.16  in \cite{cheridito2006dynamic} the representation
\[
\Psi_{t,T}\left(X\right)\,=\,{\rm  ess}\; \sup_{a\in D_{t,T}}\left\{ \prec X,a\succ_{t,T}+ \phi_{t,T}^{\#}\left(a\right)\right\}
\]
holds. This yields

\[
\begin{array}{lll}
 & & {\rm ess}\, \sup_{\tilde{X}^{i}\sim X^{i}}\Psi_{t,T}\left(\frac{1}{n}\sum_{i=1}^{n}\tilde{X}^{i}\right)\;\\
&= & {\rm ess}\, \sup_{\tilde{X}^{i}\sim X^{i}}{\rm ess}\; \sup_{a\in D_{t,T}}\left\{ \prec\frac{1}{n}\sum_{i=1}^{n}\tilde{X}^{i},a\succ_{t,T}+\phi_{t,T}^{\#}\left(a\right)\right\} \\
&= & {\rm ess}\; \sup_{a\in D_{t,T}}{\rm ess}\, \sup_{\tilde{X}^{i}\sim X^{i}}\left\{ \prec\frac{1}{n}\sum_{i=1}^{n}\tilde{X}^{i},a\succ_{t,T}+\phi_{t,T}^{\#}\left(a\right)\right\} \\
&= & {\rm ess}\; \sup_{a\in D_{t,T}}\left\{ \frac{1}{n}\sum_{i=1}^{n}{\rm ess}\, \sup_{\tilde{X}^{i}\sim X^{i}}\prec\tilde{X}^{i},a\succ_{t,T}+\phi_{t,T}^{\#}\left(a\right)\right\} \\
&= & {\rm ess}\; \sup_{a\in D_{t,T}}F_{t,T}\left(a\right).
\end{array}
\]
This completes the proof of part $\left(i\right)$.
From the above argument, we get that

\[
{\rm ess}\, \sup_{\tilde{X}^{i}\sim X^{i}}\Psi_{t,T}\left(\frac{1}{n}\sum_{i=1}^{n}\tilde{X}^{i}\right)\,=\, F_{t,T}\left(a^{0}\right)\,=\,\frac{1}{n}\sum_{i=1}^{n}{\rm ess}\, \sup_{\tilde{X}^{i}\sim X^{i}}\prec\tilde{X}^{i},a^{0}\succ_{t,T}+\phi_{t,T}^{\#}\left(a^{0}\right).
\]
Since $\phi_{t,T}$ is law invariant, if $X$ $\in$ $\mathcal{R}_{t,T}^{\infty}$,
then

\[
\begin{array}{lll}
\Psi_{t,T}\left(X\right) & = & {\rm ess}\, \sup_{\tilde{X}\sim X}\Psi_{t,T}\left(\tilde{X}\right)\qquad\qquad\qquad\qquad\qquad\qquad\;\,\\
 & = & {\rm ess}\, \sup_{\tilde{X}\sim X}{\rm ess}\; \sup_{a\in D_{t,T}}\left\{ \prec\tilde{X},a\succ_{t,T}+\phi_{t,T}^{\#}\left(a\right)\right\} \\
 & = & {\rm ess}\; \sup_{a\in D_{t,T}}\left\{ \Psi_{a}\left(X\right)+\phi_{t,T}^{\#}\left(a\right)\right\} .\quad\qquad\qquad\qquad\;
\end{array}
\]

The last equation and comonotonicity imply that

\[
\begin{array}{lll}
\Psi_{t,T}\left(\frac{1}{n}\sum_{i=1}^{n}\bar{X}^{i}\right) & \geq & \Psi_{a^{0}}\left(\frac{1}{n}\sum_{i=1}^{n}\bar{X}^{i}\right)+\phi_{t,T}^{\#}\left(a^{0}\right)\\
 & = & \frac{1}{n}\sum_{i=1}^{n}\Psi_{a}\left(\bar{X}^{i}\right)+\phi_{t,T}^{\#}\left(a^{0}\right)\\
 & = & F_{t,T}\left(a^{0}\right),\qquad\qquad\qquad\quad\;
\end{array}
\]
which combined with the previous equations yields $\left(ii\right).$

\end{proof}

The above theorem transforms the problem of studying the worst
case portfolios  into one where
comonotone portfolios of the worst scenarios are analyzed. Moreover,
this result links the problem of studying the worst portfolios in
a dynamic context to the theory of optimal coupling, as stated in
Remark \ref{Rem3.1}.

 Let us now give an example of worst case portfolios in a dynamic setting, that is based on an example provided by  Ruschendorf and Uckelmann \cite{ruschendorf2002n}; similar extensions can  be done  for other  examples presented in that paper.
\begin{example}
Define $\phi_{0,T}$ for $X \in \mathcal{R}_{0,T}^{\infty}$ as

\[
   \phi_{0,T}(X) \,=\,  E\left(1_{C_1}\sum_{t=0}^{T} X_{t}\triangle a_{t}\right).
\]
for some  $a$ $\in$ $A^{1}$ and $C_1 \in \mathcal{F}$ . Then,  we define $X^{i}$ $\in$ $\mathcal{R}_{t,T}^{\infty}$  as

\[
    X^{i} =: 1_{C_1}B_i\,\triangle a + a_i
\]
where $B_i$ is  a $T\times T$ positive semidefinite matrix and $a_i \in \mathbb{R}^{T}$.  Here  the processes $X^{i}$  and $\triangle a$  are written as  column random vectors in $\mathbb{R}^{T}$. Then, from \cite{ruschendorf2002n}, we obtain that $\left(X^{1},...,X^{n}\right)$  is a worst case portfolio of $\Psi_{0,T}$. 
Next we can go one step further, considering $\mathcal{F}_1 = \sigma \left(C_{1}^{1},...,C_{m_1}^{1}\right)$ where $C_{1}^{1},\ldots,C_{m_1}^{1}$
is a partition of $\Omega$. We define  $\phi_{1,T} $  as 

\[
\phi_{1,T}\left(X\right) = 1_{C_1}E\left(\sum_{t=1}^{T} X_{t}\triangle a_{t} |  \mathcal{F}_1\right),
\]
with $C_1 = C_{1}^{1}$. It is not difficult to see \cite{ruschendorf2002n} that  defining $X^{i,1} \in \mathcal{R}_{1,t}^{\infty}$
 as $X_{s}^{i,1} := X_{s}$, for $s= 1,\ldots,T$ and $i=1,...,n$, we obtain that $\left(X^{1,1},X^{2,1},...,X^{n,1}\right)$ is a worst case portfolio of $\Psi_{1,t}$. 

This construction can continue, with $\mathcal{F}_{2} = \sigma\left(C_{1}^{2},...,C_{m_2}^{2}\right)$  such that $\{C_{1}^{2},\ldots,C_{m_2}^{2}\}$ is a partition of $\Omega$, and  

\[
\phi_{2,T}\left(X\right) = 1_{A_2}E\left(\sum_{t=2}^{T} X_{t}\triangle a_{t} |  \mathcal{F}_2\right), 
\]
with $C_2 = C_1$. And just as in the previous period of time $t = 1$,  one can verify that  $\left(X^{1,2},X^{2,2},...,X^{n,2}\right)$ is a worst case portfolio of  $\Psi_{2,t}$  where  $X_{s}^{i,2} := X_{s}^{i}$, for $s= 2,\ldots,T$ and $i=1,...,n$. Finally, proceeding by induction, we obtain that the restriction of $\left(X^{1},...,X^{n}\right)$  to $\mathcal{R}_{t,T}^{\infty}$ is a worst case portfolio of $\Psi_{t,T}$  for all $t=0,...,T$.
  
\end{example}

Before presenting the main result of this section, we illustrate at smaller scale how the property of time consistency and the notion of worst case portfolios  are related. In particular, Theorem \ref{Thm3.2}  provides sufficient  conditions to ensure that the worst case portfolios at time zero will also be worst case portfolios at time one. This is an important fact from the point of view of a  practitioner, since at time zero it would allow  him/her to know  the worst scenario at the next time step.

\begin{theorem}\label{Thm3.2}
Let $\left(\phi_{t,T}\right)_{t\in\left[0,2\right]\cap\mathbb{N}}$
be a time-consistent monetary utility function process such that the insurance version at the initial time satisfies
\[
\Psi_{0,2}\left(X\right)\,=\, max_{Q\in\mathcal{Q}}\left\{ E_{Q}\left[X\right]-\alpha\left(Q\right)\right\} ,
\]
for all $X$ $\in$ $\mathcal{R}_{0,2}^{\infty}$ with $E_{Q}\left[X\right]:=\sum_{t=0}^{2}E_{Q_t}\left[X_t\right]$ for all $Q$ $\in$ $\mathcal{Q}$
$\subset$ $\left\{ \left(Q_{1},Q_{2},Q_{3}\right)\mid Q_{1}\in\mathbb{R}^{+},Q_{2},Q_{3}\in\mathcal{M}_{P}\right\} ,$
where $\mathcal{M}_{P}$ is the set of finitely additive measures
equivalent to $P$. If $\left(\bar{X}^{1},...,\bar{X}^{n}\right)$
is a worst portfolio of $\Psi_{0,2}$ and $\left(\hat{X}^{1},...,\hat{X}^{n}\right)$
is a worst portfolio of $\Psi_{1,2}$ such that
\[
\bar{X}^{i}\sim\hat{Y}^{i}\,:=\,\left(\bar{X}_{0}^{i},\hat{X}_{1}^{i},\hat{X}_{2}^{i}\right),\quad i=1,...,n,
\]
where   $\sim$    denotes equality in distribution of vectors, then,$\left(\bar{X}^{1},...,\bar{X}^{n}\right)$
is a worst portfolio of $\Psi_{1,2}$.
\end{theorem}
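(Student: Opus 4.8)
The plan is to show that $(\bar{X}^1,\dots,\bar{X}^n)$ attains the same essential supremum that defines the worst portfolio at time $1$. Write $V := {\rm ess}\,\sup_{\tilde{X}^i\sim X^i}\Psi_{1,2}\big(\frac1n\sum_{i=1}^n\tilde{X}^i\big)$; since $(\hat{X}^1,\dots,\hat{X}^n)$ is worst for $\Psi_{1,2}$ we have $V=\Psi_{1,2}\big(\frac1n\sum_i\hat{X}^i\big)$. Because the law of $\bar{X}^i$ projects onto the $(1,2)$-marginal of $X^i$, the tuple $(\bar{X}^1,\dots,\bar{X}^n)$ is itself admissible for the time-$1$ problem, so $\Psi_{1,2}\big(\frac1n\sum_i\bar{X}^i\big)\le V$ holds automatically; the whole point is to upgrade this to an equality. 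The first observation is that $\hat{Y}=(\hat{Y}^1,\dots,\hat{Y}^n)$ with $\hat{Y}^i=(\bar{X}^i_0,\hat{X}^i_1,\hat{X}^i_2)$ is admissible for the time-$0$ problem: by hypothesis $\bar{X}^i\sim\hat{Y}^i$, and since $(\bar{X}^1,\dots,\bar{X}^n)$ is worst for $\Psi_{0,2}$ also $\bar{X}^i\sim X^i$, whence $\hat{Y}^i\sim X^i$. Moreover $\hat{Y}^i$ and $\hat{X}^i$ agree in coordinates $1,2$, so $\Psi_{1,2}\big(\frac1n\sum_i\hat{Y}^i\big)=\Psi_{1,2}\big(\frac1n\sum_i\hat{X}^i\big)=V$.

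Next I would invoke time-consistency with the constant stopping time $\theta\equiv1$. Translating the composition identity of Definition 3.3(b) to the insurance version gives, for every $Z\in\mathcal{R}_{0,2}^\infty$, the relation $\Psi_{0,2}(Z)=\Psi_{0,2}\big(Z_0,\Psi_{1,2}(Z),\Psi_{1,2}(Z)\big)$. Apply this to $Z=\frac1n\sum_i\bar{X}^i$ and to $Z=\frac1n\sum_i\hat{Y}^i$. Both resulting processes share the \emph{same} initial coordinate $c:=\frac1n\sum_i\bar{X}^i_0$ (indeed $\hat{Y}^i_0=\bar{X}^i_0$ by construction), so writing $\psi:=\Psi_{1,2}\big(\frac1n\sum_i\bar{X}^i\big)$ I obtain $\Psi_{0,2}\big(\frac1n\sum_i\bar{X}^i\big)=\Psi_{0,2}(c,\psi,\psi)$ and $\Psi_{0,2}\big(\frac1n\sum_i\hat{Y}^i\big)=\Psi_{0,2}(c,V,V)$. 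Two facts now pin these scalars together: since $(\bar{X}^1,\dots,\bar{X}^n)$ is worst for $\Psi_{0,2}$ and $\hat{Y}$ is admissible at time $0$, we get $\Psi_{0,2}(c,\psi,\psi)\ge\Psi_{0,2}(c,V,V)$; while $\psi\le V$ a.s.\ together with monotonicity of $\Psi_{0,2}$ gives the reverse $\Psi_{0,2}(c,\psi,\psi)\le\Psi_{0,2}(c,V,V)$. Hence $\Psi_{0,2}(c,\psi,\psi)=\Psi_{0,2}(c,V,V)$.

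The crux is to promote this equality of scalars to the pointwise identity $\psi=V$, and this is exactly where the special maximal representation is used. Let $Q^\star=(Q_1^\star,Q_2^\star,Q_3^\star)\in\mathcal{Q}$ attain the maximum in $\Psi_{0,2}(c,\psi,\psi)=\max_{Q\in\mathcal{Q}}\{Q_1 c+E_{Q_2}[\psi]+E_{Q_3}[\psi]-\alpha(Q)\}$. Since $Q^\star$ is merely a candidate for $\Psi_{0,2}(c,V,V)$, combining this with the equality just established yields $E_{Q_2^\star}[\psi-V]+E_{Q_3^\star}[\psi-V]\ge0$. But $\psi-V\le0$ a.s., so each of these two nonpositive summands must vanish, in particular $E_{Q_2^\star}[V-\psi]=0$ with $V-\psi\ge0$. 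Because $Q_2^\star\in\mathcal{M}_P$ is equivalent to $P$, this forces $V-\psi=0$ $P$-a.s., i.e.\ $\Psi_{1,2}\big(\frac1n\sum_i\bar{X}^i\big)=\psi=V$. By Definition 3.2 this is precisely the statement that $(\bar{X}^1,\dots,\bar{X}^n)$ is a worst portfolio of $\Psi_{1,2}$.

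The main obstacle is this final step: monotonicity alone delivers only one of the two inequalities between the scalarised values, and without extra structure equality of scalars need not imply the a.s.\ equality of the underlying $\mathcal{F}_1$-random variables $\psi$ and $V$. The hypothesis that the representing measures $Q_2,Q_3$ lie in $\mathcal{M}_P$, applied at the maximiser, is what forces the nonpositive random variable $\psi-V$ to vanish almost surely. I would take particular care to justify the faithfulness implication $E_{Q_2^\star}[V-\psi]=0\Rightarrow V=\psi$ $P$-a.s.\ for the finitely additive measure $Q_2^\star$, since this equivalence-to-$P$ argument is the only delicate point of the proof.
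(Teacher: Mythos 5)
Your proof is correct and follows essentially the same route as the paper's: time-consistency is used to lift the time-$1$ comparison to time $0$ via the processes $(c,\psi,\psi)$ and $(c,V,V)$, and the attained maximizer in the robust representation together with the equivalence of $Q_2^\star, Q_3^\star$ to $P$ forces $\psi = V$ almost surely. The only difference is presentational — the paper argues by contradiction on the event $\{\psi < V\}$ and derives a strict inequality $\Psi_{0,2}(\bar Y) < \Psi_{0,2}(\hat Y)$, whereas you argue directly — but the ingredients and their roles are identical, and your explicit attention to the faithfulness step for the finitely additive $Q_2^\star$ is exactly the point the paper uses implicitly.
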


\begin{proof}
Let $A$ $\in$$\mathcal{F}_{1}$
be the event where

\[
\Psi_{1,2}\left(\frac{1}{n}\sum_{i=1}^{n}\bar{X}^{i}\right)\,<\,\Psi_{1,2}\left(\frac{1}{n}\sum_{i=1}^{n}\hat{X}^{i}\right),
\]
and let us assume that $P\left(A\right)$ $>$$0$. Define the processes
$\bar{Y}$ and $\hat{Y}$ by

\[
\bar{Y}\,=\,\left(\frac{1}{n}\sum_{i=1}^{n}\bar{X}^{i}\right)1_{\{0\}}+\Psi_{1,2}\left(\frac{1}{n}\sum_{i=1}^{n}\bar{X}^{i}\right)1_{[1,\infty)},
\]

\[
\hat{Y}\,=\,\left(\frac{1}{n}\sum_{i=1}^{n}\bar{X}^{i}\right)1_{\{0\}}+\Psi_{1,2}\left(\frac{1}{n}\sum_{i=1}^{n}\hat{X}^{i}\right)1_{[1,\infty)}.
\]
Clearly $\bar{Y}$ $\leq$ $\hat{Y}$, and for $t$ $\geqq$ $1$

\[
\bar{Y_{t}}\,<\,\hat{Y}_{t},\qquad in\; A.
\]
By hypothesis we can choose $Q^{0}$ $\in$ $\mathcal{Q}$ such that

\textsf{
\[
\Psi_{0,2}\left(\bar{Y}\right)\,=\, E_{Q^{0}}\left[\bar{Y}\right]-\alpha\left(Q^{0}\right).
\]
} Since $P\left(A\right)$ $>$ $0,$

\[
\begin{array}{ccc}
\Psi_{0,2}\left(\bar{Y}\right) & < & E_{Q^{0}}\left[\hat{Y}\right]-\alpha\left(Q^{0}\right)\\
 & \leq & \Psi_{0,2}\left(\hat{Y}\right).\qquad\qquad
\end{array}
\]
Finally, time-consistency and the last inequality imply that
\[
\Psi_{0,2}\left(\frac{1}{n}\sum_{i=1}^{n}\bar{X}^{i}\right)\,<\,\Psi_{0,2}\left(\frac{1}{n}\sum_{i=1}^{n}\hat{Y}^{i}\right),
\]
which is a contradiction.
\end{proof}

\begin{remark}Theorem \ref{Thm3.2} provides  insight about the requirements in order that the   property of being a worst portfolio is preserved in  two stages. In fact, the key  assumption is  that the insurance version is a penalized  expectation over a discrete set of lotteries. However, the idea of using a dynamic programming type of argument seems to be a natural venue to provide  a more general result.  In order to accomplish this aim, we shall modify accordingly the hypothesis on the discrete set of lotteries,   which appear on the  representation of the monetary utility function.
\end{remark}
Now we  move our analysis to temporal relations  for a larger time horizon.  With this objective in mind,  we introduce before some relevant concepts.

\begin{definition}\label{Def3.4}
Let $\left(\phi_{t,T}\right)_{t\in\left[0,T\right]}$
be a  monetary utility function process. We call
$\left\{ \left(X^{t,1},...,X^{t,n}\right)\right\} _{t\in\left[0,T\right]\cap\mathbb{N}}$
an adapted worst portfolio process for the insurance version $\Psi_{t,T}$ if  $\left(X^{t,1},...,X^{t,n}\right)$
is a  worst portfolio  of  $\Psi_{t,T}$ for each $t$ $=$ $0$,....,$T,$
and

\[
X^{t,i}\sim\left(X_{t}^{t,i},X_{t+1}^{t+1,i},...,X_{T}^{t+1,i}\right),\quad i=1,...,n,
\]
for $t=0,\ldots,T-1$. Notice that, abusing of the notation, we are denoting by $X^{t,i}$ the process starting at time $t$ given by
$(X^{t,i}_t,\ldots, X^{t,i}_T)$.
\end{definition}

Definition \ref{Def3.4} can be explained as  follows. An adapted worst portfolio process  consists of worst case portfolios at each period of time such that, when any of the process at a given period of time $t$ is constrained to the subsequent period $t+1$, then it has the same marginals as the worst case portfolio process at  time $t$. 


Next, we illustrate with an example the connection between adapted worst case portfolio process and time consistency. This will be put in a more broad perspective in the rest of this section. 

\begin{example}
Let $\left(\phi_{t,T}\right)_{t\in\left[0,T\right]\cap\mathbb{N}}$
be the utility function process described in Example 2.1. Given \\
$\left\{ \left(X^{t,1},...,X^{t,n}\right)\right\} _{t\in\left[0,T\right]\cap\mathbb{N}}$ an adapted worst
portfolio process of the respective  insurance version process\\
 $\left(\Psi_{t,T}\right)_{t\in\left[0,T\right]\cap\mathbb{N}}$,
we claim that for each $t_{0}$ $\in$ $\left[0,T\right]$ $\cap$
$\mathbb{N}$ fixed, $\left(X^{t_{0},1},...,X^{t_{0},n}\right)$ is
the worst portfolio of \\$\Psi_{t,T}$, for $t$ $=$ $t_{0}$,....,$T$. To verify this, we just have to prove that $\left(X^{t_{0},1},...,X^{t_{0},n}\right)$
is the worst portfolio of $\Psi_{t_{0}+1,T}.$ By hypothesis, we
know that

\[
\Psi_{t_{0}+1,T}\left(\frac{1}{n}\sum_{i=1}^{n}X^{t_{0},i}\right)\leq\Psi_{t_{0}+1,T}\left(\frac{1}{n}\sum_{i=1}^{n}\bar{X}^{t_{0}+1,i}\right),
\]
where $\bar{X}^{t_{0}+1,i}$ $\in$ $\mathcal{R}_{t_{0},T}^{\infty}$,
$i$ $=$ $1,$ $...$ $n,$ with  $\bar{X}_{t_{0}}^{t_{0}+1,i}$$=$ $X_{t_{0}}^{t_{0},i},$ and $\bar{X}_{s}^{t_{0}+1,i}$   $=$
$X_{s}^{t_{0}+1,i},$ for  $s$ $=$ $t_{0}+1,$ $...$ $T.$ Moreover, time-consistency and the definition
of an adapted worst portfolio process yields

\[
\frac{1}{\alpha}\log\, E\left(\exp\left(\alpha\,\Psi_{t_{0}+1,T}\left(\frac{1}{n}\sum_{i=1}^{n}\bar{X}^{t_{0}+1,i}\right)\right)\mid\mathcal{F}_{t_{0}}\right)=
\Psi_{t_{0},T}\left(\frac{1}{n}\sum_{i=1}^{n}\bar{X}^{t_{0}+1,i}\right)\leq
\]

\[
\Psi_{t_{0},T}\left(\frac{1}{n}\sum_{i=1}^{n}X^{t_{0},i}\right)=\frac{1}{\alpha}\log\, E\left(\exp\left(\alpha\,\Psi_{t_{0}+1,T}\left(\frac{1}{n}\sum_{i=1}^{n}X^{t_{0},i}\right)\right)\mid\mathcal{F}_{t_{0}}\right).
\]
From the last two inequalities we conclude that $\left(X^{t_{0},1},...,X^{t_{0},n}\right)$
is a worst portfolio of $\Psi_{t_{0}+1,T}$ and the result follows.
\end{example}

We are now ready to present the main result of this paper.  It puts
	on firm ground the intuitive idea that a worst case portfolio can
	be preserved over time, under suitable conditions. This implies that an agent will face the maximum aggregated risk across time given by the same portfolios. This has the following  important financial implication: In order to find a worst case portfolio,  it is sufficient to do it at the very beginning.  Moreover, it is  proved that there is   strong connection between worst case portfolios across time  and the temporal notion of time consistency, which can be interpreted as a  dynamic programming condition \cite{cheridito2006dynamic}. Notice that this connection between dynamic programming and time consistency of dynamic risk measures has been explored already  in different contexts, and next we show some other useful implications of this relationship.

\begin{theorem}\label{MainThm}
For each $s\in\left[0,T\right]\cap\mathbb{N}$,
let $\mathcal{Q}_{s}$$\subset$$D_{s,T}^{e}$ be a convex set with
$\bigtriangleup a_{k}$$\leq$$\triangle b_{k}$, $\varepsilon_{s}$$\leq$$\sum_{j=t+1}^{T}\triangle a_{j},$
for all $a$ $\in$$\mathcal{Q}_{s}$, $k$ $\in$$\mathbb{N}$, with
$b$ $\in$ $A^{1}$ and $\varepsilon_{s}$$\in$$L_{+}^{\infty}\left(\mathcal{F}_{s}\right)$
such that $P\left(\varepsilon_{s}>0\right)$ $=$ $1$. Now, we define  $\Psi_{s,T}\left(\cdot\right) = -\phi_{s,T}\left(-\cdot\right)$ and assume
that $\left(\phi_{s,T}\right)_{s\in\left[0,T\right]\cap\mathbb{N}}$
is a time-consistent monetary utility function process, such that
\begin{equation}\label{FenchelRep}
\phi_{s,T}\left(X\right)={\rm ess}\: \inf{}_{a\in \mathcal{Q}_{s}}\left\{ \prec X,a\succ_{s,T}-\phi_{s,T}^{\#}\left(a\right)\right\} ,\quad X\in\mathcal{R}_{s,T}^{\infty},
\end{equation}
for all $s=0,1,\cdots,T.$  If $\left\{ \left(X^{s,1},...,X^{s,n}\right)\right\} _{s\in\left[0,T\right]\cap\mathbb{N}}$
is an adapted worst portfolio process of $\left(\Psi_{s,T}\right)_{s\in\left[0,T\right]\cap\mathbb{N}}$,
then $\left(X^{0,1},...,X^{0,n}\right)$ is a worst case portfolio
of $\Psi_{t,T}$, for all $0$ $\leq$ $t$ $\leq$ $T$; see Definition \ref{Def3.2}.
\end{theorem}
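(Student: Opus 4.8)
The plan is to reduce the multi-period assertion to a single time step and then iterate it. Write $\bar{W}^{s}:=\frac1n\sum_{i=1}^{n}X^{s,i}$ and let $M_{u}:={\rm ess}\,\sup_{\tilde{X}^{i}\sim X^{i}}\Psi_{u,T}\left(\frac1n\sum_i\tilde{X}^{i}\right)$ denote the maximal aggregated utility at time $u$, so that a portfolio with marginals $X^{i}$ is worst at time $u$ precisely when its average attains $M_{u}$. I would prove by induction on $t$ that $\left(X^{0,1},\dots,X^{0,n}\right)$ is worst at $\Psi_{t,T}$, the base case $t=0$ being the hypothesis. The inductive core is the following one-step claim, a direct generalization of Theorem \ref{Thm3.2}: if $\left(X^{s,1},\dots,X^{s,n}\right)$ is worst at $\Psi_{s,T}$, then it is worst at $\Psi_{s+1,T}$. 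First I would record the marginal bookkeeping from Definition \ref{Def3.4}: marginalizing $X^{s,i}\sim(X^{s,i}_{s},X^{s+1,i}_{s+1},\dots,X^{s+1,i}_{T})$ to the coordinates $\ge s+1$ gives $X^{s,i}|_{[s+1,T]}\sim X^{s+1,i}$, so $\bar{W}^{s}$ restricted to $[s+1,T]$ is an admissible competitor at time $s+1$; since $\left(X^{s+1,\cdot}\right)$ is worst there, $\Psi_{s+1,T}(\bar{W}^{s})\le M_{s+1}$. The claim thus amounts to excluding strict inequality on a set of positive probability.

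For the one-step claim I would argue by contradiction. Suppose $\Psi_{s+1,T}(\bar{W}^{s})<M_{s+1}$ on some $A\in\mathcal{F}_{s+1}$ with $P(A)>0$, and set $\delta:=M_{s+1}-\Psi_{s+1,T}(\bar{W}^{s})\ge0$, so that $\delta>0$ on $A$. Using the time-consistency property to replace tails by conditional utilities, define
\[
\bar{Y}:=\Big(\tfrac1n\sum_i X^{s,i}_{s}\Big)1_{\{s\}}+\Psi_{s+1,T}(\bar{W}^{s})\,1_{[s+1,\infty)},\qquad \hat{Y}:=\Big(\tfrac1n\sum_i X^{s,i}_{s}\Big)1_{\{s\}}+M_{s+1}\,1_{[s+1,\infty)},
\]
so that $\hat{Y}-\bar{Y}=\delta\,1_{[s+1,\infty)}\ge0$. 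Time-consistency gives $\Psi_{s,T}(\bar{Y})=\Psi_{s,T}(\bar{W}^{s})=M_{s}$, while $\hat{Y}$ is the utility of the glued competitor $\hat{W}^{i}:=(X^{s,i}_{s},X^{s+1,i}_{s+1},\dots,X^{s+1,i}_{T})$, which by Definition \ref{Def3.4} satisfies $\hat{W}^{i}\sim X^{s,i}\sim X^{i}$ and is therefore admissible at time $s$.

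Now I would invoke the robust representation (\ref{FenchelRep}) in the form $\Psi_{s,T}(X)={\rm ess}\,\sup_{a\in\mathcal{Q}_{s}}\{\prec X,a\succ_{s,T}+\phi_{s,T}^{\#}(a)\}$, together with the attainment of the supremum guaranteed by the standing assumptions on $\mathcal{Q}_{s}$ (convexity, the domination $\triangle a_{k}\le\triangle b_{k}$, and the uniform lower bound $\varepsilon_{s}\le\sum_{j>s}\triangle a_{j}$); this maximizer-existence statement is exactly of the type developed in Section 4. Choosing $a^{0}\in\mathcal{Q}_{s}$ with $\Psi_{s,T}(\bar{Y})=\prec\bar{Y},a^{0}\succ_{s,T}+\phi_{s,T}^{\#}(a^{0})$, the condition $a^{0}\in D_{s,T}^{e}$ with $\varepsilon_{s}\le\sum_{j>s}\triangle a^{0}_{j}$ forces
\[
\Psi_{s,T}(\hat{W})\ge\prec\hat{Y},a^{0}\succ_{s,T}+\phi_{s,T}^{\#}(a^{0})=\Psi_{s,T}(\bar{Y})+E\Big(\delta\sum_{j>s}\triangle a^{0}_{j}\,\big|\,\mathcal{F}_{s}\Big)\ge M_{s}+E\big(\delta\,\varepsilon_{s}\mid\mathcal{F}_{s}\big).
\]
Since $\delta\,\varepsilon_{s}\ge0$, $\delta>0$ on $A$ with $P(A)>0$, and $\varepsilon_{s}>0$ a.s., the last conditional expectation is strictly positive on a set of positive probability, so $\Psi_{s,T}(\hat{W})>M_{s}$ there, contradicting the admissibility of $\hat{W}$ and the maximality of $M_{s}$. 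This proves that each $\left(X^{s,\cdot}\right)$ is worst at $\Psi_{s+1,T}$; moreover $X^{0,i}|_{[u,T]}\sim X^{u,i}$ for every $u$, by chaining the per-index relations of Definition \ref{Def3.4}, so the time-zero portfolio is always an admissible competitor at each later time.

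The step I expect to be the main obstacle is controlling the marginals of the glued competitor along the iteration. The argument above is clean for advancing the adapted portfolio $\left(X^{s,\cdot}\right)$ by one step, because Definition \ref{Def3.4} supplies exactly the glue $(X^{s,i}_{s},X^{s+1,i}_{s+1},\dots)$ with the correct law $X^{i}$. To conclude that the \emph{fixed} portfolio $\left(X^{0,\cdot}\right)$ is worst at every later time, one must re-run the contradiction with $\left(X^{0,\cdot}\right)$ in place of $\left(X^{s,\cdot}\right)$, and there the natural competitor $(X^{0,i}_{u},X^{u+1,i}_{u+1},\dots)$ is no longer covered by a single adapted relation: although $X^{0,i}|_{[u,T]}\sim X^{u,i}$ holds per index $i$, neither the joint law across $i$ of the tails nor the admissibility of this particular glue follows automatically, since gluing $X^{0,i}_{u}$ to the tail of $X^{u+1,\cdot}$ mixes data that the hypothesis only couples separately. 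Establishing that at each stage there is an admissible competitor with time-$u$ coordinate $X^{0,i}_{u}$ and tail attaining $M_{u+1}$ — which is what closes the contradiction — is the delicate point, and is precisely where the optimal-coupling and comonotonicity machinery behind Theorem \ref{Thm3.1} and Remark \ref{Rem3.1} must be brought to bear. A secondary technical point is the uniform attainment of the essential supremum in (\ref{FenchelRep}) allowing the selection of a single optimal density $a^{0}$, which relies on the convexity and boundedness hypotheses imposed on $\mathcal{Q}_{s}$.
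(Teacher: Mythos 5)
Your one–step reduction and contradiction argument follow the same strategy as the paper's proof: the event $A$, the collapsed processes obtained by replacing the tail with $\Psi_{t+1,T}$ via time-consistency, the dual representation, and the uniform lower bound $\varepsilon_{s}\leq\sum_{j>t}\triangle a_{j}$ are exactly the ingredients the paper uses. Where you differ is the endgame. The paper first establishes the equality $\Psi_{t,T}\left(\bar{X}^{t}\right)=\Psi_{t,T}\left(\bar{X}^{t+1}\right)$ along a maximizing sequence $\left\{a^{k}\right\}\subset\mathcal{Q}_{t}$ (obtained from upward directedness via the pasting $a1_{U}+b1_{U^{c}}$), deduces that $E\left[1_{D\cap A\cap C}\left(\sum_{j>t}\triangle a_{j}^{k}\right)\left(\Psi_{t+1,T}\left(\bar{X}^{t+1}\right)-\Psi_{t+1,T}\left(\bar{X}^{t}\right)\right)\right]\rightarrow0$, and extracts a subsequence along which $\sum_{j>t}\triangle a_{j}^{k}\rightarrow0$ on a set of positive probability, contradicting the bound by $\varepsilon_{s}$. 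You instead produce the quantitative gap $\Psi_{s,T}\left(\hat{Y}\right)\geq\Psi_{s,T}\left(\bar{Y}\right)+E\left(\delta\varepsilon_{s}\mid\mathcal{F}_{s}\right)$ and contradict maximality at time $s$ directly; this is shorter and correct. The one misstep is your claim that the supremum in the representation is \emph{attained} by a single $a^{0}\in\mathcal{Q}_{s}$ ``guaranteed by the standing assumptions'': convexity and the two bounds do not give attainment --- that is the content of Theorem \ref{Thm4.1}, which requires $\parallel\cdot\parallel_{A^{1}}$-compactness, not assumed here. But the misstep is harmless: the inequality $\Psi_{s,T}\left(\hat{Y}\right)\geq\,\prec\hat{Y},a\succ_{s,T}+\phi_{s,T}^{\#}\left(a\right)$ holds for \emph{every} $a\in\mathcal{Q}_{s}$, and $\prec\hat{Y},a\succ_{s,T}-\prec\bar{Y},a\succ_{s,T}\,\geq E\left(\delta\varepsilon_{s}\mid\mathcal{F}_{s}\right)$ uniformly in $a$, so taking the essential supremum (or the paper's maximizing sequence) over $a$ on the $\bar{Y}$ side yields the same gap with no attainment needed.

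Your closing concern about the iteration is well taken, and you should know that the paper does not resolve it either: its proof consists solely of the one-step claim (``it is enough to prove that the worst case portfolio at time $t$ is also a worst case portfolio at time $t+1$''), and passing from ``$\left(X^{0,1},\dots,X^{0,n}\right)$ is worst at time $1$'' to ``worst at time $2$'' requires exactly the glued competitor $\left(X_{1}^{0,i},X_{2}^{2,i},\dots,X_{T}^{2,i}\right)$, whose joint law is not pinned down by the per-index relations of Definition \ref{Def3.4}, as you observe. So your proposal matches the paper's proof in substance, is marginally cleaner in the contradiction step, and is more candid about where the induction remains incomplete.
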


\begin{proof}
It is enough to prove that the worst case portfolio at the time $t$ is also a worst case portfolio at time $t+1$. Let $X$ $\in$
$\mathcal{R}_{0,T}^{\infty}$ be fixed. Choose $a$, $b$ $\in$ $\mathcal{Q}_{t}$
arbitrary and define $d$ $=$ $a$$1_{U}$$+$$b$$1_{U^{c}}$$\in$
$\mathcal{Q}_{t}$, with $U$ defined by

\[
U:=\left\{ \prec X,a\succ_{t,T}+\phi_{t,T}^{\#}\left(a\right) \,>\, \prec X,b\succ_{t,T}+\phi_{t,T}^{\#}\left(b\right)\right\} \in\mathcal{F}_{t}.
\]
Therefore,

\[
\max\left\{ \prec X,a\succ_{t,T}+\phi_{t,T}^{\#}\left(a\right),\prec X,b\succ_{t,T}+\phi_{t,T}^{\#}\left(b\right)\right\} \,=\,\prec X,d\succ+\phi_{t,T}^{\#}\left(d\right),
\]
which implies that $\left\{ \prec X,a\succ_{t,T}+\phi_{t,T}^{\#}\left(a\right)\mid a\in\mathcal{Q}_{t}\right\} $
is directed upwards, and hence there exists a sequence $\left\{ a^{k}\right\} _{k\in\mathbb{N}}$
$\subset$ $\mathcal{Q}_{t}$ such that

\[
\lim_{k\longrightarrow\infty}\left\{ \prec X,a^{k}\succ_{t,T}+\phi_{t,T}^{\#}\left(a^{k}\right)\right\} \nearrow\Psi_{t,T}\left(X\right).
\]

Now, define the event $A=\left\{ \Psi_{t+1,T}\left(\frac{1}{n}\sum_{i=1}^{n}X^{t,i}\right)\,<\,\Psi_{t+1,T}\left(\frac{1}{n}\sum_{i=1}^{n}X^{t+1,i}\right)\right\} $,
and we shall assume that $P[A]>0$. Define the processes $\bar{X}^{t}$,
$\bar{X}^{t+1}$$\in$ $\mathcal{R}_{t,T}^{\infty}$ as
\[
\bar{X}_{s}^{t}\,=\,\frac{1}{n}\sum_{i=1}^{n}X_{s}^{t,i},\qquad t\leq s\leq T
\]
\[
\bar{X}_{s}^{t+1}\,=\,\frac{1}{n}\sum_{i=1}^{n}X_{s}^{t+1,i},\qquad t+1\leq s\leq T
\]

\[
\bar{X}_{t}^{t+1}\,=\,\frac{1}{n}\sum_{i=1}^{n}X_{t}^{t,i}.
\]

Time-consistency and the previous arguments imply that there is a
sequence $\left\{ a^{k}\right\} _{k\in\mathbb{N}}$ $\subset$ $\mathcal{Q}_{t}$
such that
\[
\begin{array}{lll}
\Psi_{t,T}\left(\bar{X}^{t}\right) & = & \Psi_{t,T}\left(\left(\bar{X}^{t}\right)1_{\left\{ t\right\} }+\Psi_{t+1,T}\left(\bar{X}^{t}\right)1_{[t+1,\infty)}\right)\\
 & = & \lim_{k\rightarrow\infty}\left\{ \prec\left(\bar{X}^{t}\right)1_{\left\{ t\right\} }+\Psi_{t+1,T}\left(\bar{X}^{t}\right)1_{[t+1,\infty)},a^{k}\succ_{t,T}+\phi_{t,T}^{\#}\left(a^{k}\right)\right\} .
\end{array}
\]
Defining $Y$ $=$ $\left(\bar{X}^{t}\right)1_{\left\{ t\right\} }$$+$
$\Psi_{t+1,T}\left(\bar{X}^{t}\right)1_{[t+1,\infty)},$ for $k$
$\in$$\mathbb{N}$ it follows that
\[
\prec Y,a^{k}\succ_{t,T}\,=\, E\left(\bar{X}_{t}^{t}\triangle a_{t}^{k}+\Psi_{t+1,T}\left(\bar{X}^{t}\right)\left(\sum_{j=t+1}^{T}\triangle a_{j}^{k}\right)\mid\mathcal{F}_{t}\right).
\]
Clearly,

\[
\begin{array}{lll}
Y_{t}^{k} & := & \bar{X}_{t}^{t}\triangle a_{t}^{k}+\Psi_{t+1,T}\left(\bar{X}^{t}\right)\left(\sum_{j=t+1}^{T}\triangle a_{j}^{k}\right)\\
 & \leq & \bar{X}_{t}^{t+1}\triangle a_{t}^{k}+\Psi_{t+1,T}\left(\bar{X}^{t+1}\right)\left(\sum_{j=t+1}^{T}\triangle a_{j}^{k}\right)=:Y_{t+1}^{k}.
\end{array}
\]

For each $k$ $\in$ $\mathbb{N}$ denote by $C_{k}$ the set $\{\sum_{j=t+1}^{T}\triangle a_{j}^{k}>0\}$.
Observe that, since $a^{k}$ $\in$ $D_{0,T}^{e}$, we have that $P\left(C_{k}\right)$
$=$ $1$, and hence the set $C$ $=$ $\cap_{k\in\mathbb{N}}$$C_{k}$
is such that $P\left(C\right)$ $=$$1$. This implies that $P\left(C\cap A\right)$
$=$ $P\left(A\right)$ $>$ $0$. By the definitions given above,
\[
\left\{ Y_{t}^{k}\,<\, Y_{t+1}^{k}\right\} =C_{k}\cap A,\qquad {\rm  for\; all}\; k\in\mathbb{N}.
\]
Let $D$ $:=$$\cap_{k\in\mathbb{N}}$$B_{k}^{c},$ with $B_{k}$
$:=$ $\left\{ E\left[Y_{t}^{k}\mid\mathcal{F}_{t}\right]\,=\, E\left[Y_{t+1}^{k}\mid\mathcal{F}_{t}\right]\right\} $$\in$$\mathcal{F}_{t}$,
and observe that

\[
\begin{array}{lll}
0 & = & \int_{B_{k}}\left(E\left(Y_{t+1}^{k}\mid\mathcal{F}_{t}\right)-E\left(Y_{t}^{k}\mid\mathcal{F}_{t}\right)\right)dP\\
 & = & \int_{B_{k}}\left(Y_{t+1}^{k}-Y_{t}^{k}\right)dP\\
 & = & \int_{B_{k}\cap\left\{ Y_{t+1}^{k}>Y_{t}^{k}\right\} }\left(Y_{t+1}^{k}-Y_{t}^{k}\right)dP.\qquad\quad
\end{array}
\]

Then,

\[
P\left(B_{k}\cap C\cap A\right)=P\left(B_{k}\cap A\right)=P\left(B_{k}\cap C_{k}\cap A\right)=0,
\]
and,

\[
P\left(D^{c}\cap A\right)=P\left(D^{c}\cap C\cap A\right)=0.
\]
On the other hand, by time-consistency and hypothesis, we deduce that

\[
\begin{array}{lll}
\Psi_{t,T}\left(\bar{X}^{t}\right) & = &
\lim_{k\rightarrow\infty}\left\{  E\left[Y_{t}^{k}\mid\mathcal{F}_{t}\right]+\phi_{t,T}^{\#}\left(a^{k}\right)\right\}   \\
 & = & \lim_{k\rightarrow\infty}\left\{ E\left[Y_{t+1}^{k}\mid\mathcal{F}_{t}\right]+\phi_{t,T}^{\#}\left(a^{k}\right)\right\} \\
 & = & \Psi_{t,T}\left(\bar{X}^{t+1}\right),
\end{array}
\]
and hence
\[
\lim_{k\rightarrow\infty}\mid\left(E\left[Y_{t}^{k}\mid\mathcal{F}_{t}\right]+\phi_{t,T}^{\#}\left(a^{k}\right)\right)-\left(E\left[Y_{t+1}^{k}\mid\mathcal{F}_{t}\right]+\phi_{t,T}^{\#}\left(a^{k}\right)\right)\mid=0.
\]
Therefore,

\[
\lim_{k\rightarrow\infty}1_{D}E\left[\left(\sum_{j=t+1}^{T}\triangle a_{j}^{k}\right)\left(\Psi_{t+1,T}\left(\bar{X}^{t+1}\right)-\Psi_{t+1,T}\left(\bar{X}^{t}\right)\right)\mid\mathcal{F}_{t}\right]=0,
\]
which implies

\[
\lim_{k\rightarrow\infty}E\left[1_{D}\left(\sum_{j=t+1}^{T}\triangle a_{j}^{k}\right)\left(\Psi_{t+1,T}\left(\bar{X}^{t+1}\right)-\Psi_{t+1,T}\left(\bar{X}^{t}\right)\right)\right]=0,
\]
since $\left(\sum_{j=t+1}^{T}\triangle a_{j}^{k}\right)$ $\leq$
$\left(\sum_{j=t+1}^{T}\triangle b_{j}^{k}\right)$ for each $k$
$\in$$\mathbb{N}$. Thus,

\[
\lim_{k\rightarrow\infty}E\left[1_{D\cap A\cap C}\left(\sum_{j=t+1}^{T}\triangle a_{j}^{k}\right)\left(\Psi_{t+1,T}\left(\bar{X}^{t+1}\right)-\Psi_{t+1,T}\left(\bar{X}^{t}\right)\right)\right]=0.
\]
Taking a subsequence if necessary, we conclude that

\[
\varepsilon_{s}1_{A\cap D\cap C}\leq \lim_{k\rightarrow\infty}1_{D\cap A\cap C}\sum_{j=t+1}^{T}\triangle a_{j}^{k}=0,
\]
which is a contradiction, since $P$$\left(A\cap D\cap C\cap\left\{ \varepsilon_{s}>0\right\} \right)$ $=$ $P$$\left(A\cap D\right)$ $=$ $P$$\left(A\right)$$>$$0$.
\end{proof}
\begin{remark}
\begin{enumerate}
\item The main hypothesis in Theorem \ref{MainThm}  is concerned with the  constraint on the set $\mathcal{Q}_{s}$ appearing in the dual representation (\ref{FenchelRep}), which should    be bounded both from above and below. Observe that the first of this conditions,  concerning the existence of $b \in A^1$ satisfying  $\bigtriangleup a_{k}$$\leq$$\triangle b_{k}$ 	for all $a$ $\in$$\mathcal{Q}_{s}$, is not so restrictive,  since  the set $\mathcal{Q}_{s}$ is a subset of the class of positive conditional densities $D_{s,T}^e$, while the set $A^1$ encompasses all the possible normalized conditional measures to the space $\mathcal{R}^{\infty}$. Moreover,  recall that when $a \in D^{e}_{s,T}$, by our definition in the previous  section, we have
\begin{equation}
\label{aux1}
 P\left(\sum_{j = s}^T\bigtriangleup a_{j}>0\right)\,\,=\,\,1.
\end{equation}
 Hence,  we are asking that the increments of the  elements of  $\mathcal{Q}_{s}$ are uniformly away from zero, ensuring that there is a positive effect at each time step, along the evolution of the process. 

\item It is worth noting the connection of Theorem 3.3 with the necessary conditions for  time consistency presented  in  \cite[Theorem 4.19]{cheridito2006dynamic}. Following the proof of such result, we obtain that for any $t$ and $s$  with   $0 \leq t \leq s \leq T$, the penalty function $\phi_{t,T}^{\#}$  satisfies
\[
  \phi_{t,T}^{\#}\left(a\right)  \,\,=\,\,    {\rm ess}\: \sup{}_{b\in \mathcal{Q}_{s}}   \,\,\phi_{t,T}^{\#}\left(a\oplus_{\Omega}^{\theta}b\right)\,\, + \,\, E\left[ \phi_{s,T}^{\#}(a) \mid  \mathcal{F}_{t}\right],\,\,\,\,\,  \forall  a \in \mathcal{Q}_{t}.
\]
Here the expression  $a\oplus_{\Omega}^{\theta}b$ refers to the concatenation of processes, presented below as  Definition 4.1 in the next section.  The above display can be thought as the time consistency property for the underlying process corresponding to the penalties.
\item	The above theorems generalized the comonotonicity results from \cite{ruschendorf2012worst} to the dynamic framework. 
As a general conclusion, we can say that the property of comonotonicity  seems to be  the next step to characterize worst case portfolios. Summarizing, we have shown in Theorem \ref{Thm3.1} that the problem of worst case portfolios can be translated into a problem of comonotonicity, while Theorem \ref{MainThm} presents a setting under which worst case portfolios can be time invariant.
\end{enumerate}
\end{remark}


While the conditions in Theorem  \ref{MainThm} are sufficient to ensure the preservation of worst case portfolios as time evolves, it will be shown below that such conditions are not necessary. This is illustrated  by considering a particular  class of dynamic monetary utility functions with  their corresponding insurance versions. Before  such  example is presented we introduce a technical definition.

\begin{definition}
Let $D_{T}^{rel}$ be the class
\[
D_{T}^{rel}=\left\{ h\in L^{1}\left(\mathcal{F}_{T}\right)\mid h>0,E\left(h\right)=1\right\} .
\]
For $f,$ $g$ $\in$ $D_{T}^{rel},$ $s$ $\in$ $\left[0,T\right]$$\cap$
$\mathbb{N}$ and $A$ $\in$ $\mathcal{F}_{s},$ we define the pasting
$f$ $\otimes_{A}^{s}$ $g$ by

\[
f\otimes_{A}^{s}g:=\begin{cases}
\begin{array}{cc}
f & on\; A^{c}\cup\left[E\left(g\mid\mathcal{F}_{s}\right)=0\right]\\
E\left[f\mid\mathcal{F}_{s}\right]\frac{g}{E\left[g\mid\mathcal{F}_{s}\right]} & on\; A\cap\left[E\left(g\mid\mathcal{F}_{s}\right)>0\right]
\end{array} & .\end{cases}
\]
A subset $\mathcal{P}$ of $D_{T}^{rel}$ is m1-stable
if it contains $f$ $\otimes_{A}^{s}$ $g$ for all $f,$ $g$ $\in$
$\mathcal{P},$ every $s$ $\in$ $\left[0,T\right]$ $\cap$ $\mathbb{N}$
and $A$ $\in$ $\mathcal{F}_{s}.$
\end{definition}

We  conclude this section  with an example that  leaves as an open question the problem of finding necessary and sufficient conditions on a insurance version process to have the same worst case portfolios across time. Thus, although the conditions in Theorem \ref{MainThm}  ensure this property, we proceed to illustrate that the same conclusions can also be achieved with another set of assumptions. Within a more general framework, we shall see  that using the concept of stability under concatenation within subsets of the density processes $D_{0,T}^{e}$ we can achieve this property for a certain class of utility function processes; see Theorem \ref{Thm 4.2}.

\begin{example}
Let $\mathcal{P}$ be
a non-empty m1-stable subset of $D_{T}^{rel}$ and $\alpha$ $>$
$0.$ For $t$ $=$ $0,$$...,$ $T$ and $X$ $\in$ $\mathcal{R}_{t,T}^{\infty},$
define

\[
\phi_{t,T}\left(X\right):=ess\, \inf_{f\in\mathcal{P}}\left\{ -\frac{1}{\alpha}\log\,\frac{E\left(f\, \exp(-\alpha X_{T})\mid\mathcal{F}_{t}\right)}{E\left(f\mid\mathcal{F}_{t}\right)}\right\} ,\quad X\in\mathcal{R}_{0,T}^{\infty}.
\]
 This is a time-consistent utility function process \cite{cheridito2006dynamic},
which is a robust version of the utility function process given in
Example 2.1. We consider the insurance version process $\Psi_{t,T}(\cdot) = -\phi_{t,T}\left( - \cdot\right)$. With this example we intend to illustrate that, even
without the representation in the hypothesis of Theorem 3.3,
it is possible to obtain the same conclusion. Namely, if $\left\{ \left(X^{t,1},...,X^{t,n}\right)\right\} _{t\in\left[0,T\right]\cap\mathbb{N}}$
is an adapted worst case portfolio process of $\left(\Psi_{t,T}\right)_{t\in\left[0,T\right]\cap\mathbb{N}}$,
then $\left(X^{0,1},...,X^{0,n}\right)$ is a worst case  portfolio
of $\Psi_{t,T}$ for all $0$ $\leq$ $t$ $\leq$ $T$. To this end for $r \in \mathbb{N}$ we introduce the notation

\[
   f_r  = \frac{f}{E(f\mid \mathcal{F}_r)}
\]
and fix $t_0$ $\in$ $\{0,1,...,T\}$. Given $X$ $\in$ $\mathcal{R}_{0,T}^{\infty},$ there exists
a sequence, see Section 5.6 in \cite{cheridito2006dynamic}, $\left\{ f^{k}\right\} _{k\in\mathbb{N}}$$\subset$
$D_{T}^{rel}$ such that
\[
\left\{ \frac{1}{\alpha}\log\, E\left(f_{t_{0}}^{k}\, \exp(\alpha X_{T})\mid\mathcal{F}_{t_{0}}\right)\right\} \nearrow\Psi_{t_{0},T}(X),\quad as\; k\longrightarrow\infty,
\]
which implies

\[
\left\{ E\left(f_{t_{0}}^{k}\, \exp(\alpha X_{T})\mid\mathcal{F}_{t_{0}}\right)\right\} \nearrow \exp\left(\alpha\,\Psi_{t_{0},T}(X)\right),\quad as\; k\longrightarrow\infty.
\]
 Now we follow the same lines as in the proof of Theorem 3.3.
First, define the event $$A=\left\{ \Psi_{t_{0}+1,T}\left(\frac{1}{n}\sum_{i=1}^{n}X^{t_{0},i}\right)\,<\,\Psi_{t_{0}+1,T}\left(\frac{1}{n}\sum_{i=1}^{n}X^{t_{0}+1,i}\right)\right\}, $$
and assume that $P[A]$ $>$ $0$. The processes $\bar{X}^{t_{0}}$,
$\bar{X}^{t_{0}+1}$$\in$ $\mathcal{R}_{t_{0},T}^{\infty}$ are defined
so as $\bar{X}^{t}$, $\bar{X}^{t+1}$ in the proof of Theorem 3.3.
Time-consistency and the previous arguments imply that there is a
sequence $\left\{ a^{k}\right\} _{k\in\mathbb{N}}$ $\subset$ $D_{T}^{rel}$
such that

\[
\begin{array}{lll}
\exp\left(\alpha\,\Psi_{t_{0},T}\left(\bar{X}^{t_{0}}\right)\right) & \nwarrow & \lim_{k\rightarrow\infty}\left\{ E\left(a_{t_{0}}^{k}\, \exp(\alpha\,\Psi_{t_{0}+1,T}\left(\bar{X}^{t_{0}}\right))\mid\mathcal{F}_{t_{0}}\right)\right\} \\
 & = & \lim_{k\rightarrow\infty}\left\{ E\left(a_{t_{0}}^{k}\, \exp(\,\alpha\Psi_{t_{0}+1,T}\left(\bar{X}^{t_{0}+1}\right))\mid\mathcal{F}_{t_{0}}\right)\right\} \\
 & = & \exp\left(\alpha\,\Psi_{t_{0},T}\left(\bar{X}^{t_{0}+1}\right)\right).
\end{array}
\]
Therefore,

\[
\lim_{k\rightarrow\infty}\left\{ E\left(a_{t_{0}}^{k}\, \exp(\alpha\,\Psi_{t_{0}+1,T}\left(\bar{X}^{t_{0}+1}\right))\mid\mathcal{F}_{t_{0}}\right)-E\left(a_{t_{0}}^{k}\, \exp(\alpha\,\Psi_{t_{0}+1,T}\left(\bar{X}^{t_{0}}\right))\mid\mathcal{F}_{t_{0}}\right)\right\} =0.
\]
Since for all $k$ $\in$ $\mathbb{N},$

\[
\begin{array}{ccc}
E\left(a_{t_{0}}^{1}\, \exp(\alpha\,\Psi_{t_{0}+1,T}\left(\bar{X}^{t_{0}}\right))\mid\mathcal{F}_{t_{0}}\right) & \leq & E\left(a_{t_{0}}^{k}\, \exp(\alpha\,\Psi_{t_{0}+1,T}\left(\bar{X}^{t_{0}}\right))\mid\mathcal{F}_{t_{0}}\right)\quad\\
 & \leq & E\left(a_{t_{0}}^{k}\, \exp(\alpha\,\Psi_{t_{0}+1,T}\left(\bar{X}^{t_{0}+1}\right))\mid\mathcal{F}_{t_{0}}\right)\\
 & \leq & \exp\left(\alpha\,\Psi_{t_{0},T}\left(\bar{X}^{t_{0}+1}\right)\right),\hspace{1em}\quad\quad\quad\quad\;\:
\end{array}
\]
we have that

\[
\lim_{k\rightarrow\infty}\left\{ E\left(\mid a_{t_{0}}^{k}\, \exp(\alpha\,\Psi_{t_{0}+1,T}\left(\bar{X}^{t_{0}+1}\right))-a_{t_{0}}^{k}\, \exp(\alpha\,\Psi_{t_{0}+1,T}\left(\bar{X}^{t_{0}}\right)\mid)\right)\right\} =0,
\]
and

\[
\lim_{k\rightarrow\infty}\left\{ E\left(\mid \exp\left(\alpha\,\Psi_{t_{0},T}\left(\bar{X}^{t_{0}}\right)\right)-a_{t_{0}}^{k}\, \exp(\alpha\,\Psi_{t_{0}+1,T}\left(\bar{X}^{t_{0}}\right))\mid\right)=0\right\} .
\]
By the last two identities we conclude that there is a subsequence $\left\{ a^{k_{i}}\right\} _{i\in\mathbb{N}}$
$\subset$ $\left\{ a^{k}\right\} _{k\in\mathbb{N}}$ for which

\[
\lim_{i\rightarrow\infty}a_{t_{0}}^{k_{i}}=0\qquad on\quad A,\quad and
\]

\[
\lim_{i\rightarrow\infty}a_{t_{0}}^{k_{i}}\, \exp(\alpha\,\Psi_{t_{0}+1,T}\left(\bar{X}^{t_{0}}\right))=\exp\left(\alpha\,\Psi_{t_{0},T}\left(\bar{X}^{t_{0}}\right)\right).
\]
We conclude that $P$$\left(\exp\left(\alpha\Psi_{t_{0},T}\left(\bar{X}^{t_{0}}\right)\right)=0\right)$
$>$ $0,$ but this is a contradiction.

\end{example}

\section{ Optimization in representation of insurance version functions}

The results presented in the previous section are based on representation
	of conditional utility functions in terms of penalty functions,  which can be thought as Fenchel conjugate representations; see (\ref{FenchelRep}).  Now we revisit this type of representations and analyze conditions under which the maximum is attained. Using this  representations, we will show a different venue, to Theorem \ref{MainThm},  in order to  attain the property of stability of worst case portfolios of time consistent insurance versions of utility processes. The main difference with  Theorem \ref{MainThm} is that in this section we are  concerned with insurance versions associated with  coherent, relevant and time-consistent monetary utility function process. Such additional constraints will allow us to  relax some of the assumptions of Theorem \ref{MainThm}. 
Furthermore,   we will also obtain as byproduct some  properties in financial risk that can be deduced  
when  the maximum in the  Fenchel conjugate representations of conditional utility functions is attained.

   
We begin by defining the concatenation  of density processes from a given  period of time to another. This condition was introduced originally by  Cheridito et. al. \cite{cheridito2006dynamic}; see for instance their Theorem 3.16.

\begin{definition}
Let $a,$ $b$ $\in$
$\mathcal{A}_{+}^{1},$ $\theta$ a finite $\left(\mathcal{F}_{t}\right)-$stopping
time and $A$ $\in$ $\mathcal{F_{\theta}}.$Then the concatenation
$a$ $\oplus_{A}^{\theta}$ $b$ is defined by

\[
\left(a\oplus_{A}^{\theta}b\right)_{t}:=1_{C}a_{t}+1_{C^{c}}\left(a_{\theta-1}+\frac{\prec1,a\succ_{\theta,\infty}}{\prec1,b\succ_{\theta,\infty}}\left(b_{t}-b_{\theta-1}\right)\right),
\]
with $C$ $=$ $\left\{ t<\theta\right\} $ $\cup$$A^{c}$
$\cup$$\left\{ \prec1,b\succ_{\theta,\infty}=0\right\}$. A
subset $\mathcal{M}$ of $\mathcal{A}_{+}^{1}$is said to be stable
under concatenation if $a$ $\oplus_{A}^{\theta}$ $b$ $\in$$\mathcal{M}$
for all $a,$$b$ $\in$ $\mathcal{M}$, each $\left(\mathcal{F}_{t}\right)-$stopping
time $\theta$, and any $A$ $\in$ $\mathcal{F}_{\theta}$.
\end{definition}

The next theorem provides topological conditions on the set involved in the
representation theorem to achieve the maximum.   The main assumption is concerned with  the compactness of the representation set with respect to the norm $\parallel\cdot\parallel_{A^{1}}$. Although it is a restrictive assumption, we   impose it  for technical reasons which we were not able to overcome.

\begin{theorem}\label{Thm4.1}
For each $X$ $\in$ $\mathcal{R}_{s,T}^{\infty},$
let $\phi\left(X\right)$ $=$ ${\rm ess}\, \inf_{a\in\mathcal{M\subset}D_{s,T}}\left\{ \prec X,a\succ_{s,T}-\phi^{\#}\left(a\right)\right\} $
be a monetary utility function with $s$ $\in$ $\mathbb{N},$ $s$ $\leq$ $T$. If $\mathcal{M}$
is $\parallel\cdot\parallel_{A^{1}}$-compact, stable under concatenation,
with $\triangle a_{t}$ $\leq$ $\triangle b_{t}$ for all $a$ $\in$
$\mathcal{M},$ $t$ $\in$ $\mathbb{N},$ for some $b$ $\in$ $\mathcal{A}_{+}^{1},$
then, the insurance version  $\Psi\left(\cdot\right) := -\phi\left(-\left(\cdot\right)\right)$ satisfies

\[
\Psi\left(X\right)\,=\,\,\prec X,a^{X}\succ_{s,T}+\phi^{\#}\left(a^{X}\right),
\]
with $a^{X}$ $\in$ $\mathcal{M}$ depending of $X$.
\end{theorem}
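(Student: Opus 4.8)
The plan is to first rewrite the claim as an attainment statement for an essential supremum. By the sign relation $\Psi(X)=-\phi(-X)$ and the linearity of $\prec\cdot,\cdot\succ_{s,T}$ in its first argument, the hypothesised representation gives
\[
\Psi(X)\,=\,{\rm ess}\,\sup_{a\in\mathcal{M}}\left\{\prec X,a\succ_{s,T}+\phi^{\#}(a)\right\}.
\]
Writing $G(a):=\prec X,a\succ_{s,T}+\phi^{\#}(a)$, the theorem is equivalent to the existence of $a^{X}\in\mathcal{M}$ with $G(a^{X})=\Psi(X)$. Note that $G(a)\le\Psi(X)$ for every $a\in\mathcal{M}$ is automatic from the representation, so only the reverse inequality at a single point must be produced.

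First I would produce a maximizing sequence inside $\mathcal{M}$. For $a,b\in\mathcal{M}$ set $U=\{G(a)>G(b)\}\in\mathcal{F}_{s}$ and paste $a$ and $b$ at time $s$ to form an element $d$. Using locality of the conditional bilinear form together with locality of the penalty $\phi^{\#}$ (inherited from property $(0)$ of $\phi$ on its acceptance set $\mathcal{C}_{\phi}$), one gets $G(d)=G(a)\vee G(b)$, while stability of $\mathcal{M}$ under concatenation guarantees $d\in\mathcal{M}$. Hence $\{G(a):a\in\mathcal{M}\}$ is upward directed, and the standard essential-supremum lemma yields a sequence $\{a^{k}\}\subset\mathcal{M}$ with $G(a^{k})\nearrow\Psi(X)$ almost surely.

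Next I would invoke compactness. Since $\mathcal{M}$ is $\|\cdot\|_{A^{1}}$-compact, a subsequence satisfies $\|a^{k_{j}}-a^{X}\|_{A^{1}}\to0$ for some $a^{X}\in\mathcal{M}$; refining once more gives $\triangle a^{k_{j}}_{t}\to\triangle a^{X}_{t}$ almost surely for each $t$. The domination $\triangle a_{t}\le\triangle b_{t}$ with $b\in\mathcal{A}^{1}_{+}$ supplies the integrable envelope needed to move the limit inside the conditional expectation, so that $\prec X,a^{k_{j}}\succ_{s,T}\to\prec X,a^{X}\succ_{s,T}$, first in $L^{1}$ and then almost surely along a further subsequence. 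Combining this with $G(a^{k_{j}})\to\Psi(X)$ shows that $\phi^{\#}(a^{k_{j}})$ converges almost surely to $L:=\Psi(X)-\prec X,a^{X}\succ_{s,T}$.

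Finally I would close the argument through upper semicontinuity of $\phi^{\#}$, which is where the main difficulty lies. For each fixed $Y\in\mathcal{C}_{\phi}$ one has $\phi^{\#}(a^{k_{j}})\le\prec Y,a^{k_{j}}\succ_{s,T}$; letting $j\to\infty$ and using the continuity of the linear functional gives $L\le\prec Y,a^{X}\succ_{s,T}$ almost surely. Since $L$ does not depend on $Y$, taking the essential infimum over $Y\in\mathcal{C}_{\phi}$ yields $L\le\phi^{\#}(a^{X})$, whence $\Psi(X)=L+\prec X,a^{X}\succ_{s,T}\le G(a^{X})$, and equality follows. The delicate points are the bookkeeping between $L^{1}$ and almost-sure convergence of the conditional bilinear forms, the legitimacy of comparing the single limit $L$ with the essential infimum over the uncountable family $\mathcal{C}_{\phi}$, and verifying the locality of $\phi^{\#}$; the compactness hypothesis is exactly what converts the directed maximizing sequence into an attained maximizer.
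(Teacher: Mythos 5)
Your proposal is correct and follows essentially the same route as the paper: upward directedness of $\{\prec X,a\succ_{s,T}+\phi^{\#}(a):a\in\mathcal{M}\}$ via the concatenation $b\oplus_{A}^{s}c$, extraction of an $\|\cdot\|_{A^{1}}$-convergent subsequence of the maximizing sequence by compactness, and upper semicontinuity of $\phi^{\#}$ deduced from $\phi^{\#}(a)={\rm ess}\,\inf_{Y\in\mathcal{C}_{\phi}}\prec Y,a\succ_{s,T}$ together with convergence of the bilinear forms. You merely spell out two steps the paper leaves implicit (the dominated-convergence justification using $\triangle a_{t}\le\triangle b_{t}$, and the explicit passage from $L\le\prec Y,a^{X}\succ_{s,T}$ for each $Y$ to $L\le\phi^{\#}(a^{X})$), so no substantive comparison is needed.
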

\begin{proof}
Given $X$ $\in$ $\mathcal{R}_{s,T}^{\infty}$,
let $\left\{ a^{k}\right\} $ be a sequence in $\mathcal{M}$ such
that $a^{k}$ $\longrightarrow_{\parallel\cdot\parallel_{\mathcal{A}^{1}}}$$a^{*},$
with $a^{*}$$\in$ $\mathcal{M}$.  Passing to
some subsequence if necessary, we have that

\[
\prec X,a^{k}\succ_{s,T}\,\longrightarrow\,\prec X,a^{*}\succ_{s,T}.
\]
Since the last identity holds for all $X$ $\in$ $\mathcal{R}_{s,T}^{\infty},$
it follows that

\[
\overline{\lim}{}_{k\longrightarrow\infty}\phi^{\#}\left(a^{k}\right)\leq\phi^{\#}\left(a^{*}\right),
\]
and hence

\[
\overline{\lim}{}_{k\longrightarrow\infty}\left(\prec X,a^{k}\succ_{s,T}+\,\phi^{\#}\left(a^{k}\right)\right)\leq\,\prec X,a^{*}\succ_{s,T}+\,\phi^{\#}\left(a^{*}\right).
\]

Now we prove that $\left\{ \prec X,a\succ_{s,T}+\,\phi^{\#}\left(a\right)\mid a\in\mathcal{M}\right\} $
is directed upwards, which together with the previous arguments yield
the result. Given $b$, $c$ $\in$ $\mathcal{M}$, from the stability
under concatenation of $\mathcal{M},$ $d$ $=$ $b$ $\oplus_{A}^{s}$
$c$ $\in$ $\mathcal{M}$, with $A$ given by

\[
A=\left\{ \prec X,c\succ_{s,T}+\,\phi^{\#}\left(c\right)>\prec X,b\succ_{s,T}+\,\phi^{\#}\left(b\right)\right\} ,
\]
and

\[
d_{r}\,=\, b_{r}1_{A^{C}\cup\left\{ r<s\right\} }+\left(b_{s-1}+\left(c_{r}-c_{s-1}\right)\right)1_{A\cap\left\{ r\geq s\right\} }.
\]
This implies that

\[
\begin{array}{lll}
\prec X,d\succ_{s,T}+\phi^{\#}\left(d\right) & = & 1_{A^{c}}\prec X,b\succ_{s,T}+1_{A}\prec X,c\succ_{s,T}+\,\phi^{\#}\left(1_{A^{c}}b+1_{A}c\right)\\
 & = & 1_{A^{c}}\left(\prec X,b\succ_{s,T}+\,\phi^{\#}\left(b\right)\right)+1_{A}\left(\prec X,c\succ_{s,T}+\,\phi^{\#}\left(c\right)\right)\\
 & = & \max\left\{ \prec X,b\succ_{s,T}+\,\phi^{\#}\left(b\right),\prec X,c\succ_{s,T}+\,\phi^{\#}\left(c\right)\right\} ,
\end{array}
\]
and the theorem follows. 
\end{proof}

Our next result is  an application of Theorem \ref{Thm4.1}, and shows that  worst case portfolios  do not necessary consists in imposing constrains in  the marginal distributions. The aim is to illustrate that the general mathematical properties of worst case portfolios can be analyzed using Theorem \ref{Thm4.1}. Later we shall  go back to the definition of worst case portfolios given in Definition \ref{Def3.2}.   

\begin{corollary}\label{Cor4.1}
Let $\Psi$ be an insurance version satisfying
the conditions of Theorem 4.1, with $s$ $=$ $1$, $T$ $=$ $2$,
and $\phi^{\#}\left(a\right)>-\infty,$ for all $a$ $\in$ $\mathcal{M}$.
Let us assume that $\mathcal{C}$ $\subset$ $Mat_{2\times2}\left(\mathcal{F}_{1}\right)$,
$\left\{ \left(A_{11},A_{12},A_{21},A_{22}\right)\mid A\in\mathcal{C}\right\} $
is a compact set in $\left(L^{\infty}\left(\mathcal{F}_{1}\right)\right)^{4}$,
and
\[
K_{X}=\left\{ \prec AX,a\succ_{1,2}+\phi^{\#}\left(a\right)\mid A\in\mathcal{C},a\in\mathcal{M}\right\}
\]
is directed upwards for all $X$ $\in$ $\mathcal{R}_{1,2}^{\infty}.$
If $X$ $\in$ $\mathcal{R}_{1,2}^{\infty},$ then,
\[
{\rm ess}\, \sup_{A\in\mathcal{C}}\Psi\left(AX\right)=\Psi\left(A^{0}X\right),
\]
for some $A^{0}$ $\in$ $\mathcal{C}$.

\end{corollary}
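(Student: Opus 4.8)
The plan is to collapse the essential supremum over $\mathcal{C}$ into a joint essential supremum over $\mathcal{C}\times\mathcal{M}$, and then extract a maximizing pair by combining the directed-upwards hypothesis with the compactness of both index sets.

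First I would invoke Theorem~\ref{Thm4.1} for each $AX$. Since the entries of $A$ are $\mathcal{F}_1$-measurable, $AX\in\mathcal{R}_{1,2}^{\infty}$, and the dual representation of the insurance version, whose supremum is attained by Theorem~\ref{Thm4.1}, gives
\[
\Psi(AX)\,=\,{\rm ess}\,\sup_{a\in\mathcal{M}}\left\{\prec AX,a\succ_{1,2}+\phi^{\#}(a)\right\}.
\]
Taking the essential supremum over $A\in\mathcal{C}$ and using that an iterated essential supremum coincides with the joint one, I obtain
\[
{\rm ess}\,\sup_{A\in\mathcal{C}}\Psi(AX)\,=\,{\rm ess}\,\sup_{(A,a)\in\mathcal{C}\times\mathcal{M}}\left\{\prec AX,a\succ_{1,2}+\phi^{\#}(a)\right\}\,=\,{\rm ess}\,\sup K_X.
\]

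Next, since $K_X$ is directed upwards, there is a sequence $\{(A^k,a^k)\}\subset\mathcal{C}\times\mathcal{M}$ with $\prec A^kX,a^k\succ_{1,2}+\phi^{\#}(a^k)\nearrow{\rm ess}\,\sup K_X$. Using the compactness of $\{(A_{11},A_{12},A_{21},A_{22})\mid A\in\mathcal{C}\}$ in $(L^{\infty}(\mathcal{F}_1))^4$ together with the $\parallel\cdot\parallel_{A^1}$-compactness of $\mathcal{M}$, I pass to a subsequence along which $A^k\to A^0$ entrywise in $L^{\infty}(\mathcal{F}_1)$ for some $A^0\in\mathcal{C}$ and $a^k\to a^*$ in $A^1$ for some $a^*\in\mathcal{M}$; both limits belong to the respective sets, which are closed, and the monotone convergence to ${\rm ess}\,\sup K_X$ is preserved along this subsequence. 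The decisive point is then to pass to the limit in $\prec A^kX,a^k\succ_{1,2}$: writing the form out, each summand is a product of an $\mathcal{F}_1$-measurable entry $A^k_{ij}$, converging uniformly, with a conditional expectation $E(X_i\triangle a^k_j\mid\mathcal{F}_1)$; since $X_i\in L^{\infty}$ and $\triangle a^k_j\to\triangle a^*_j$ in $L^1$, these conditional expectations converge in $L^1$, and combining with the uniformly bounded factors yields $\prec A^kX,a^k\succ_{1,2}\to\prec A^0X,a^*\succ_{1,2}$ in $L^1$, hence almost surely along a further subsequence. As in the proof of Theorem~\ref{Thm4.1}, the penalty is upper semicontinuous along this subsequence, $\limsup_k\phi^{\#}(a^k)\leq\phi^{\#}(a^*)$, where the assumed finiteness of $\phi^{\#}$ on $\mathcal{M}$ keeps every quantity well defined. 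Consequently
\[
{\rm ess}\,\sup K_X\,=\,\lim_k\left\{\prec A^kX,a^k\succ_{1,2}+\phi^{\#}(a^k)\right\}\leq\prec A^0X,a^*\succ_{1,2}+\phi^{\#}(a^*)\leq\Psi(A^0X)\leq{\rm ess}\,\sup_{A\in\mathcal{C}}\Psi(AX),
\]
so all inequalities are equalities and $A^0\in\mathcal{C}$ is the desired maximizer.

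The step I expect to be the main obstacle is this joint passage to the limit in $\prec A^kX,a^k\succ_{1,2}$, where two distinct modes of convergence --- uniform convergence of the matrix entries and $A^1$-convergence of the densities --- must be reconciled within the essential-supremum and directed-upwards machinery, while simultaneously transferring the upper semicontinuity of $\phi^{\#}$ established in Theorem~\ref{Thm4.1} to the present setting.
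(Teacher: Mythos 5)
Your argument is correct, but it takes a genuinely different route from the paper's. You collapse everything into the joint essential supremum over $\mathcal{C}\times\mathcal{M}$, use the directed-upwards hypothesis on $K_X$ to produce a monotone maximizing sequence of \emph{pairs} $(A^k,a^k)$, and then extract a jointly convergent subsequence using compactness of both index sets, passing to the limit in the bilinear form and invoking the upper semicontinuity of $\phi^{\#}$ from the proof of Theorem~4.1. The paper instead keeps the two suprema separate: it uses the attainment from Theorem~4.1 together with the directedness of $K_X$ only to show that the family $\left\{\Psi\left(AX\right)\mid A\in\mathcal{C}\right\}$ is itself directed upwards, picks an increasing sequence $\Psi\left(A^{k}X\right)$ converging to the essential supremum, and then needs only the compactness of the matrix entries plus the estimate $\mid\prec\left(A^{k}-A^{0}\right)X,a\succ_{1,2}\mid\,\leq\,2\parallel X\parallel_{\infty}\max_{ij}\mid A_{ij}^{k}-A_{ij}^{0}\mid$, which is uniform over $a\in\mathcal{M}$ because the densities are normalized; this yields convergence of the full essential suprema, $\Psi\left(A^{k}X\right)\rightarrow\Psi\left(A^{0}X\right)$, with no need to track a maximizing density or to reuse the compactness of $\mathcal{M}$ and the semicontinuity of the penalty. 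The trade-off is that the paper's argument is leaner at this stage (one compactness argument instead of two, and a clean uniform Lipschitz bound in $A$), while yours delivers slightly more, namely a simultaneous maximizer $a^{*}\in\mathcal{M}$ realizing $\Psi\left(A^{0}X\right)$. The delicate step you flag --- reconciling the $L^{\infty}$ convergence of the entries with the $A^{1}$ convergence of the densities inside the bilinear form --- is handled at the same level of rigor (subsequences giving almost sure convergence) as the paper's own proofs, so I see no genuine gap.
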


\begin{proof}
 Let us fix $X$ $\in$ $\mathcal{R}_{1,2}^{\infty}.$ If $A,$
$B$ $\in$ $\mathcal{C},$ Theorem 4.1 implies that

\[
\Psi\left(AX\right)=\:\prec AX,a\succ_{1,2}+\phi^{\#}\left(a\right),
\]

\[
\Psi\left(BX\right)=\:\prec BX,b\succ_{1,2}+\phi^{\#}\left(b\right),
\]
for some $a,$ $b$ $\in$ $\mathcal{M}$. Since $K_{X}$ is directed
upwards, there are $C$ $\in$ $\mathcal{C}$ and $c$ $\in$ $\mathcal{M}$
such that

\[
\prec CX,c\succ_{1,2}+\phi^{\#}\left(c\right)\geq \max\left\{ \prec AX,a\succ_{1,2}+\phi^{\#}\left(a\right),\prec BX,b\succ_{1,2}+\phi^{\#}\left(b\right)\right\} ,
\]
and by Theorem \ref{Thm4.1},

\[
\prec CX,d\succ_{1,2}+\phi^{\#}\left(d\right)=\Psi\left(CX\right)={\rm ess}\, \sup_{e\in\mathcal{M}}\left\{ \prec CX,e\succ_{1,2}+\phi^{\#}\left(e\right)\right\} \geq\prec CX,c\succ_{1,2}+\phi^{\#}\left(c\right),
\]
for some $d$ $\in$ $\mathcal{M}$. Consequently, $\left\{ \Psi\left(AX\right)\mid A\in\mathcal{C}\right\} $
is directed upwards, and hence there is a sequence $\left\{ A^{k}\right\} _{k\in\mathbb{N}}$
$\subset$ $\mathcal{C}$ such that

\[
\quad as \quad k\rightarrow\infty,\quad \quad \Psi\left(A^{k}X\right)\nearrow {\rm ess}\, \sup_{A\in\mathcal{C}}\Psi\left(AX\right).
\]

By compactness of $\left\{ \left(A_{11},A_{12},A_{21},A_{22}\right)\mid A\in\mathcal{C}\right\} $,
we can find $A^{0}$$\in$ $\mathcal{C}$ such that
\[
A_{ij}^{k}\longrightarrow_{k\rightarrow\infty}A_{ij}^{0},\qquad a.s\quad {\rm for\; all}\; i,j\in\left\{ 1,2\right\} ,
\]
passing through a subsequence if necessary. Given $a$ $\in$ \textsf{\emph{$\mathcal{M}$}},
 we have that

\[
 \begin{array}{lll}
\mid\prec A^{k}X,a\succ_{1,2}+\phi^{\#}\left(a\right)-\left(\prec A^{0}X,a\succ_{1,2}+\phi^{\#}\left(a\right)\right)\mid & = & \mid\prec\left(A^{k}-A^{0}\right)X,a\succ_{1,2}\mid\qquad\qquad\quad\:\:\\
 & \leq & 2\parallel X\parallel_{\infty}\max\mid A_{ij}^{k}-A_{ij}^{o}\mid\longrightarrow0\\
 \end{array}
 \]

Therefore,

\[
\mid {\rm ess}\, \sup_{a\in\mathcal{\mathcal{M}}}\left\{ \prec A^{k}X,a\succ_{1,2}+\phi^{\#}\left(a\right)\right\} -{\rm ess}\, \sup_{a\in\mathcal{\mathcal{M}}}\left\{ \prec A^{0}X,a\succ_{1,2}+\phi^{\#}\left(a\right)\right\} \mid\longrightarrow_{k\rightarrow\infty}0,
\]
and the claim follows. 

\end{proof}

The following theorem links some implications  of attaining the maximum
in the robust representation (\ref{FenchelRep}) with the notion of worst case 
portfolios, as presented in Section \ref{Section3}. Conclusions of this theorem
are  close to those of Theorem \ref{MainThm}. Namely, we prove that for a certain class of dynamic utility functions process, the worst case portfolios of the insurance version process are preserved over time.
Despite the similarity of our next result with Theorem \ref{MainThm}, we now impose slightly
different assumptions. The requirement of boundedness from below
in Theorem \ref{MainThm} is now replaced by coherency and stability under
concatenation. Interested readers in  the relationship between
relevance, coherency, time-consistency and the representation given
in Corollary \ref{Cor4.1} are referred to Corollary 4.16 from \cite{cheridito2006dynamic}.

\begin{theorem}\label{Thm 4.2}
Let $\left(\phi_{s,T}\right)_{s\in\left[0,T\right]\cap\mathbb{N}}$
be a coherent, relevant and time-consistent monetary utility function process
such that
\[
\phi_{s,T}\left(X\right)={\rm ess}\: \inf{}_{a\in\mathcal{M}}\frac{\prec X,a\succ_{s,T}}{\prec1,a\succ_{s,T},}\quad X\in\mathcal{R}_{s,T}^{\infty},
\]
for some   compact, convex set $\mathcal{M}$$\subset$$D_{0,T}^{e}$ stable under concatenation,
with $\bigtriangleup a_{k}$$\leq$$\triangle b_{k}$, for all $a$
$\in$$\mathcal{M},$ $k$ $\in$$\mathbb{N},$ with $b$ $\in$$A^{1}.$
If $\left\{ \left(X^{s,1},...,X^{s,n}\right)\right\} _{s\in\left[0,T\right]\cap\mathbb{N}}$
is an adapted worst portfolio process of the respective insurance version  $\left(\Psi_{s,T}\right)_{s\in\left[0,T\right]\cap\mathbb{N}},$
then $\left(X^{0,1},...,X^{0,n}\right)$ is a worst portfolio
of $\Psi_{t,T}$ for all $0$ $\leq$ $t$ $\leq$ $T$.
\end{theorem}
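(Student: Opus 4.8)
The plan is to follow the architecture of the proof of Theorem \ref{MainThm}: by induction it suffices to show that if $(X^{t,1},\ldots,X^{t,n})$ is a worst portfolio of $\Psi_{t,T}$ then it is also a worst portfolio of $\Psi_{t+1,T}$, the bookkeeping of the induction being identical to that case. Writing $\bar{X}^{t}_{s}=\frac1n\sum_{i}X^{t,i}_{s}$ and $\bar{X}^{t+1}_{s}=\frac1n\sum_{i}X^{t+1,i}_{s}$ (with $\bar{X}^{t+1}_{t}:=\frac1n\sum_{i}X^{t,i}_{t}$), I would argue by contradiction, assuming $P[A]>0$ for $A=\{\Psi_{t+1,T}(\bar{X}^{t})<\Psi_{t+1,T}(\bar{X}^{t+1})\}$. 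The adapted relation of Definition \ref{Def3.4}, $X^{t,i}\sim(X^{t,i}_{t},X^{t+1,i}_{t+1},\ldots,X^{t+1,i}_{T})$, guarantees on one hand that $\bar{X}^{t+1}$ is the average of processes whose marginals equal those of $X^{1},\ldots,X^{n}$, so $\Psi_{t,T}(\bar{X}^{t+1})\le\Psi_{t,T}(\bar{X}^{t})$ by worst-case optimality at time $t$; and on the other hand that $\Psi_{t+1,T}(\bar{X}^{t})\le\Psi_{t+1,T}(\bar{X}^{t+1})$, with strict inequality precisely on $A$.

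The decisive structural difference from Theorem \ref{MainThm} lies in how the lower constraint on the densities enters. Instead of invoking the uniform bound $\varepsilon_{s}\le\sum_{j=t+1}^{T}\triangle a_{j}$, I would use compactness to \emph{attain} the essential supremum in the coherent representation. Since $\mathcal{M}$ is convex, stable under concatenation, $\|\cdot\|_{A^{1}}$-compact, and has increments dominated by $b\in A^{1}$, the directed-upward argument from the proof of Theorem \ref{Thm4.1} (where the concatenation $a\oplus_{A}^{t}b$ realises pointwise maxima inside $\mathcal{M}$) applies to the normalised family $\{\prec\cdot,a\succ_{t,T}/\prec1,a\succ_{t,T}:a\in\mathcal{M}\}$, and compactness then produces a single $a^{*}\in\mathcal{M}$ with
\[
\Psi_{t,T}(Y)=\frac{\prec Y,a^{*}\succ_{t,T}}{\prec1,a^{*}\succ_{t,T}},\qquad Y\in\mathcal{R}_{t,T}^{\infty}.
\]
The key gain is that this one maximiser lies in $D_{0,T}^{e}$, so by the very definition of $D_{0,T}^{e}$ one has $\sum_{j=t+1}^{T}\triangle a^{*}_{j}>0$ almost surely; the strict positivity of the tail increments of a \emph{single} density is exactly what the uniform bound $\varepsilon_{s}$ was forcing along sequences in Theorem \ref{MainThm}.

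To close the argument I would apply time-consistency to $\Psi_{t,T}$ with the stopping time $t+1$, writing $\Psi_{t,T}(\bar{X}^{t})=\Psi_{t,T}\!\big(\bar{X}^{t}_{t}1_{\{t\}}+\Psi_{t+1,T}(\bar{X}^{t})1_{[t+1,\infty)}\big)$ and evaluating the right-hand side at $a^{*}$. Using the \emph{same} $a^{*}$ as a feasible point in the representation of $\Psi_{t,T}(\bar{X}^{t+1})$ together with $\bar{X}^{t+1}_{t}=\bar{X}^{t}_{t}$ yields
\[
\Psi_{t,T}(\bar{X}^{t+1})-\Psi_{t,T}(\bar{X}^{t})\ \ge\ \frac{E\!\left[\big(\Psi_{t+1,T}(\bar{X}^{t+1})-\Psi_{t+1,T}(\bar{X}^{t})\big)\sum_{j=t+1}^{T}\triangle a^{*}_{j}\ \middle|\ \mathcal{F}_{t}\right]}{E\!\left[\sum_{j=t}^{T}\triangle a^{*}_{j}\ \middle|\ \mathcal{F}_{t}\right]}.
\]
The integrand in the numerator is nonnegative everywhere and, since $\Psi_{t+1,T}(\bar{X}^{t+1})-\Psi_{t+1,T}(\bar{X}^{t})>0$ on $A$ while $\sum_{j=t+1}^{T}\triangle a^{*}_{j}>0$ almost surely, it is strictly positive on $A$; because $P[A]>0$, its conditional expectation is positive on an $\mathcal{F}_{t}$-set of positive measure, forcing $\Psi_{t,T}(\bar{X}^{t+1})>\Psi_{t,T}(\bar{X}^{t})$ there. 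This contradicts worst-case optimality at time $t$, so $P[A]=0$ and the induction step is complete.

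The step I expect to be the main obstacle is the attainment itself: establishing the coherent analogue of Theorem \ref{Thm4.1} for the \emph{normalised} functional requires verifying that the denominators $\prec1,a\succ_{t,T}$ stay bounded away from $0$ along the maximising sequence, so that the ratio is jointly continuous for $\|\cdot\|_{A^{1}}$-convergence and its limit remains in $D_{0,T}^{e}$ with strictly positive tail increments; this is the one place where the compactness of $\mathcal{M}$ must be handled with care. The remaining ingredients — the directed-upward property, the time-consistency decomposition, and the final positivity argument — are routine adaptations of the proofs of Theorems \ref{Thm4.1} and \ref{MainThm}.
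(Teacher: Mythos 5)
Your proposal is correct and follows essentially the same route as the paper: reduce to one-step preservation, argue by contradiction on the event $A$, use the Theorem~\ref{Thm4.1} machinery (concatenation stability plus compactness) to attain the supremum at a single density $a^{0}\in\mathcal{M}\subset D_{0,T}^{e}$ whose tail increments are almost surely positive, and then combine time-consistency at $t+1$ with feasibility of that same $a^{0}$ in the representation of $\Psi_{t,T}(\bar{X}^{t+1})$ to produce a strict inequality on a positive-measure $\mathcal{F}_{t}$-set, contradicting worst-case optimality at time $t$. The attainment step for the normalised functional, which you rightly flag as the delicate point, is exactly what the paper invokes (and leaves to "similar arguments as in the proof of Theorem~\ref{Thm4.1}").
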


\begin{proof}
The first step is just to verify that for each $X$ $\in$ $\mathcal{R}_{0,T}^{\infty},$
  there is  $a$ $\in$
$\mathcal{M},$ such that

\[
\phi_{s,T}\left(X\right)=\frac{\prec X,a\succ_{s,T}}{\prec1,a\succ_{s,T}},\;\;\;\;s\in [0,T] \cap\mathbb{N}.
\]
This can be done  following similar arguments as in the proof  of Theorem 4.1

Hence, it is  enough to prove that, for  each $t$ $\in$ $[0,T-1]$ ,
$\left(X^{t,1},...,X^{t,n}\right)$ is a worst case portfolio of $\Psi_{t+1,T}.$
Define the event $$A=\left\{ \Psi_{t+1,T}\left(\frac{1}{n}\sum_{i=1}^{n}X^{t,i}\right)\,<\,\Psi_{t+1,T}\left(\frac{1}{n}\sum_{i=1}^{n}X^{t+1,i}\right)\right\}, $$
and assume that $P[A]$ $>$ $0.$   Let $\bar{X}^{t}$,
$\bar{X}^{t+1}$$\in$ $\mathcal{R}_{t,T}^{\infty}$ be the processes defined as

\[
\bar{X}_{s}^{t}\,=\,\frac{1}{n}\sum_{i=1}^{n}X_{s}^{t,i},\qquad t\leq s\leq T,
\]

\[
\bar{X}_{s}^{t+1}\,=\,\frac{1}{n}\sum_{i=1}^{n}X_{s}^{t+1,i},\qquad t+1\leq s\leq T,
\]

\[
\bar{X}_{t}^{t+1}\,=\,\frac{1}{n}\sum_{i=1}^{n}X_{t}^{t,i}.
\]

Time-consistency and Theorem 4.1 imply that there is $a^{0}$
$\in$ $\mathcal{M}$ such that

\[
\begin{array}{lll}
\Psi_{t,T}\left(\bar{X}^{t}\right) & = & \Psi_{t,T}\left(\left(\bar{X}^{t}\right)1_{\left\{ t\right\} }+\Psi_{t+1,T}\left(\bar{X}^{t}\right)1_{[t+1,\infty)}\right)\\
 & = & \frac{\prec\left(\bar{X}^{t}\right)1_{\left\{ t\right\} }+\Psi_{t+1,T}\left(\bar{X}^{t}\right)1_{[t+1,\infty)},a^{0}\succ_{t,T}}{\prec1,a^{0}\succ_{t,T}}.
\end{array}
\]

Letting $Y$ $:=$ $\left(\bar{X}^{t}\right)1_{\left\{ t\right\} }+\Psi_{t+1,T}\left(\bar{X}^{t}\right)1_{[t+1,\infty)},$
we have that

\[
\prec Y,a^{0}\succ\;=E\left(\bar{X}_{t}^{t}\triangle a_{t}^{0}+\Psi_{t+1,T}\left(\bar{X}^{t}\right)\left(\underset{j=t+1}{\overset{T}{\sum}}\triangle a_{j}^{0}\right)\mid\mathcal{F}_{t}\right).
\]

In addition,

\[
\begin{array}{lll}
Y_{t} & := & \bar{X}_{t}^{t}\triangle a_{t}^{0}+\Psi_{t+1,T}\left(\bar{X}^{t}\right)\left(\underset{j=t+1}{\overset{T}{\sum}}\triangle a_{j}^{0}\right)\\
 & \leq & \bar{X}_{t}^{t+1}\triangle a_{t}^{0}+\Psi_{t+1,T}\left(\bar{X}^{t+1}\right)\left(\underset{j=t+1}{\overset{T}{\sum}}\triangle a_{j}^{0}\right)\\
 & =: & Y_{t+1}.
\end{array}
\]

Let  $C$ $:=$ $\left\{ \underset{j=t+1}{\overset{T}{\sum}}\triangle a_{j}^{0}>0\right\} ,$ and note that
  $P$$\left(C\right)$
$=$ $1,$ since $a^{0}$ $\in$ $D_{0,T}^{e}.$   Therefore, $P(A\cap C)$ $=$ $P\left(A\right)$ $>$
$0$ and

\[
Y_{t}<Y_{t+1}\qquad in\quad A\cap C.
\]

Then, there exists an event $B$ $\in$ $\mathcal{F}_{t}$, with $P\left(B\right)>0$, such that

\[
E\left(Y_{t}\mid\mathcal{\mathcal{F}}_{t}\right)<E\left(Y_{t+1}\mid\mathcal{\mathcal{F}}_{t}\right)\qquad \text{in}\quad B.
\]

Finally, by time-consistency and last inequality,   the following display holds in $B$

\[
\begin{array}{lll}
\Psi_{t,T}\left(\bar{X}^{t}\right) & = & \frac{E\left(Y_{t}\mid\mathcal{F}_{t}\right)}{\prec1,a^{0}\succ_{t,T}}\\
 & < & \frac{E\left(Y_{t+1}\mid\mathcal{F}_{t}\right)}{\prec1,a^{0}\succ_{t,T}}\\
 & \leq & {\rm ess}\, \sup_{a\in\mathcal{M}}\frac{\prec\left(\bar{X}^{t+1}\right)1_{\left\{ t\right\} }+\Psi_{t+1,T}\left(\bar{X}^{t+1}\right)1_{[t+1,\infty)},a\succ_{t,T}}{\prec1,a\succ_{t,T}}\\
 & = & \Psi_{t,T}\left(\left(\bar{X}^{t+1}\right)1_{\left\{ t\right\} }+\Psi_{t+1,T}\left(\bar{X}^{t+1}\right)1_{[t+1,\infty)}\right)\\
 & = & \Psi_{t,T}\left(\bar{X}^{t+1}\right),
\end{array}
\]

which is a contradiction.
\end{proof}

Notice   that the proof of the previous theorem follows the same lines as that of  Theorem \ref{MainThm}. However, the key difference consist in  avoiding  one of the main  assumptions in this theorem, concerning the boundedness  from  below in the set $\mathcal{Q}_{s}$. This is achieved    noting that coherency and stability under
concatenation, together with Theorem \ref{Thm4.1},  allow us to write the monetary utility function process in a simple way. Thus, 
for each $X$ $\in$ $\mathcal{R}_{0,T}^{\infty},$
there is  $a$ $\in$
$\mathcal{M},$ such that
\[
\phi_{s,T}\left(X\right)=\frac{\prec X,a\succ_{s,T}}{\prec1,a\succ_{s,T}},\;\;\;\;s\in [0,T] \cap\mathbb{N}.
\]
 Hence, it is not necessary to have a global property such as  boundedness  from below in $\mathcal{Q}_{s}$.
 The result follows from the properties of the set  $D_{0,T}^{e}$ and the time consistency assumption, illustrating once again  the fact that the preservation of worst case portfolios of insurance versions is naturally linked with the property of time consistency.

We conclude this paper presenting a result that allow us to determine
when the risk of modifying two given processes is comparable, even
though the processes are not.

\begin{proposition}
Let $\phi$ be a concave monetary utility function that
is continuous for bounded decreasing sequences in $R_{t,T}^{\infty}$.
Assume that there exists a matrix $A$ $\in$ $Mat_{\left(T-t+1\right)\times\left(T-t+1\right)}\left(\mathcal{F}_{1}\right)$
with the following properties:

$\left(i\right)$ $\left(1,...,1\right)$ $\in$ $\mathbb{R}^{T-t+1}$
is an eigenvector of $A$ with eigenvalue $1.$

$\left(ii\right)$ $\left(A_{i,j}\right)$$\geq$ $0,$
for all $i$, $j$ $\in$ $\left\{ 1,...,T-t+1\right\} .$

$\left(iii\right)$ $X$ $\in$ $\mathcal{R}_{t,T}^{\infty}$
and $AX$ $\in$ $\mathcal{C}_{\phi}$ imply $X$ $\in$ $\mathcal{C}_{\phi}$

$\left(iv\right)$
\[
A\left(\sum_{i=1}^{n}\tilde{X}^{i}\right)\leq\sum_{i=1}^{n}\bar{X}^{i},
\]
for $\tilde{X}^{i}$, $\bar{X}^{i}$ $\in$ $\mathcal{R}_{t,T}^{\infty}$
, $i$ $=$ $1$,...,$n$.

Then,
\[
\Psi\left(A\left(\frac{1}{n}\sum_{i=1}^{n}\tilde{X}^{i}\right)\right)\leq\Psi\left(A\left(\frac{1}{n}\sum_{i=1}^{n}\bar{X}^{i}\right)\right),
\]
where $\Psi\left(X\right)  := -\phi\left(-X\right) $ for all $X$ $\in$ $\mathcal{R}_{t,T}^{\infty}$.
\end{proposition}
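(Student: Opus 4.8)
The plan is to reduce the whole statement to the single inequality $\phi(AX)\le\phi(X)$ for all $X\in\mathcal{R}_{t,T}^{\infty}$ (call this $(\star)$), and then to chain it with the order relation supplied by $(iv)$. First I would record two elementary facts about $A$. Since the entries $A_{ij}$ are scalars, $A$ commutes with multiplication by any $m\in L^{\infty}(\mathcal{F}_t)$ and by any indicator $1_B$; in particular, because $(i)$ says $A$ fixes the constant vector $\mathbf 1=(1,\dots,1)$ (so $A$ has unit row sums), one gets $A(X+m1_{[t,\infty)})=AX+m1_{[t,\infty)}$ and $A(1_BX)=1_B\,AX$. I would also note that $\Psi=-\phi(-\cdot)$ is monotone increasing, directly from monotonicity $(1)$ of $\phi$.

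Granting $(\star)$, the conclusion follows quickly. Writing $\tilde S=\tfrac1n\sum_i\tilde X^{i}$ and $\bar S=\tfrac1n\sum_i\bar X^{i}$, hypothesis $(iv)$ together with linearity of $A$ gives $A\tilde S\le\bar S$. Monotonicity of $\Psi$ then yields $\Psi(A\tilde S)\le\Psi(\bar S)$, while $(\star)$ applied to $X=-\bar S$ reads $\phi(A(-\bar S))\le\phi(-\bar S)$, which is exactly $\Psi(\bar S)\le\Psi(A\bar S)$. Concatenating the two inequalities gives $\Psi(A\tilde S)\le\Psi(A\bar S)$, the desired claim.

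The heart of the argument, and the step I expect to be the main obstacle, is deriving the pointwise inequality $(\star)$ from the acceptance-set hypothesis $(iii)$. The natural move is to pass to the contrapositive of $(iii)$, but $(iii)$ is a \emph{global} statement about membership in $\mathcal{C}_\phi$, so its contrapositive only yields an event of positive probability rather than an almost-sure inequality. To promote it to $(\star)$ I would localize using property $(0)$. Suppose $P(B)>0$ for $B=\{\phi(AX)>\phi(X)\}\in\mathcal{F}_t$, and set $\varepsilon=\tfrac12(\phi(AX)-\phi(X))\in L^{\infty}_{+}(\mathcal{F}_t)$ and $Z=X-(\phi(X)+\varepsilon)1_{[t,\infty)}$, so that $\phi(Z)=-\varepsilon$ by cash-invariance $(2)$. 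By locality $(0)$, $\phi(1_BZ)=-\varepsilon1_B$, which is strictly negative on $B$, so $1_BZ\notin\mathcal{C}_\phi$; hence by $(iii)$ also $A(1_BZ)\notin\mathcal{C}_\phi$. On the other hand $A(1_BZ)=1_B\,AZ$ with $AZ=AX-(\phi(X)+\varepsilon)1_{[t,\infty)}$, so a further use of $(0)$ and $(2)$ gives $\phi(A(1_BZ))=1_B(\phi(AX)-\phi(X)-\varepsilon)=\tfrac12\,1_B(\phi(AX)-\phi(X))\ge0$ everywhere, contradicting $A(1_BZ)\notin\mathcal{C}_\phi$. Therefore $P(B)=0$ and $(\star)$ holds.

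A remark on the hypotheses is in order. The route above uses only properties $(0)$, $(1)$, $(2)$ of $\phi$ together with $(i)$, $(iii)$, $(iv)$; the nonnegativity $(ii)$ enters if one instead argues through the order-preservation of the linear map $X\mapsto AX$, and concavity with continuity for bounded decreasing sequences would be needed only if one prefers to obtain $(\star)$ from the robust representation of $\phi$ rather than from the acceptance set directly. I would keep the direct argument as the main line, since it is self-contained and isolates the single delicate point, namely the localization of $(iii)$ via $(0)$.
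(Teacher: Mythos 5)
Your proof is correct, but it takes a genuinely different route from the paper's. The paper defines $\bar{\phi}(X):=\phi(AX)$, checks (using $(i)$ and $(ii)$) that $\bar{\phi}$ is again a concave monetary utility function continuous for bounded decreasing sequences, invokes the robust representation $\Psi(X)={\rm ess}\sup_{a\in D_{t,T}}\{\prec X,a\succ_{t,T}+\phi^{\#}(a)\}$ from Cheridito et al., translates $(iii)$ into the acceptance-set inclusion $\mathcal{C}_{\bar{\phi}}\subset\mathcal{C}_{\phi}$ and hence the penalty inequality $\phi^{\#}(a)\leq\bar{\phi}^{\#}(a)$, and then concludes by combining this with $(iv)$ inside the ${\rm ess}\sup$ (monotonicity of $\prec\cdot,a\succ_{t,T}$ for $a$ with nonnegative increments). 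You instead bypass the dual representation entirely: you extract the pointwise inequality $\phi(AX)\leq\phi(X)$ directly from $(iii)$ by a localization argument using only properties $(0)$, $(2)$ and $(i)$, and then chain it with $(iv)$ via monotonicity of $\Psi$. I checked the localization step and it is sound: $\phi(1_BZ)=-1_B\varepsilon<0$ on $B$ forces $1_BZ\notin\mathcal{C}_{\phi}$, the contrapositive of $(iii)$ forces $A(1_BZ)\notin\mathcal{C}_{\phi}$, yet $\phi(A(1_BZ))=1_B\varepsilon\geq 0$ gives the contradiction. Your route is more elementary and in fact uses strictly fewer hypotheses — neither concavity, nor continuity for bounded decreasing sequences, nor the nonnegativity $(ii)$ enter, whereas the paper needs all of them to legitimize $\bar{\phi}$ as a monetary utility function admitting the Theorem~3.16 representation; what the paper's route buys in exchange is the explicit penalty-function comparison $\phi^{\#}\leq\bar{\phi}^{\#}$, which is of independent interest in their framework. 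Two cosmetic points: your $\varepsilon=\tfrac12(\phi(AX)-\phi(X))$ need not be nonnegative off $B$ (harmless, since every occurrence is multiplied by $1_B$), and the entries of $A$ are $\mathcal{F}_1$-measurable random variables rather than scalars, though this does not affect the commutation $A(1_BZ)=1_B\,AZ$ for $B\in\mathcal{F}_t$.
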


\begin{proof}
Define $\bar{\phi}$$:$ $\mathcal{R}_{t,T}^{\infty}$ $\longrightarrow$$L^{\infty}\left(\mathcal{F}_{1}\right)$
as

\[
\bar{\phi}\left(X\right)=\phi\left(AX\right),\quad X\in \mathcal{R}_{t,T}^{\infty}.
\]
It is not difficult to see that $\bar{\phi}$ is a concave monetary
utility function that is continuous for bounded decreasing sequences.
Let us denote by $\bar{\Psi}$  the functional given as $\bar{\Psi}\left(X\right) := -\bar{\phi}\left(-X\right)$ for all $X$ $\in$ $\mathcal{R}_{t,T}^{\infty}$. By Theorem 3.16 from \cite{cheridito2006dynamic}, we have that

\[
\Psi\left(X\right)={\rm ess}\, \sup_{a\in D_{t,T}}\left\{ \prec X,a\succ_{t,T}+\phi^{\#}\left(a\right)\right\} ,
\]

\[
\bar{\Psi}\left(X\right)={\rm ess}\, \sup_{a\in D_{t,T}}\left\{ \prec X,a\succ_{t,T}+\bar{\phi}^{\#}\left(a\right)\right\} .
\]

Since $\bar{\Psi}\left(X\right)$ $=$ $\Psi\left(AX\right),$
then

\[
\bar{\Psi}\left(X\right)={\rm ess}\, \sup_{a\in D_{t,T}}\left\{ \prec AX,a\succ_{t,T}+\phi^{\#}\left(a\right)\right\} .
\]
By hypothesis, it is clear that

\[
\phi^{\#}\left(a\right)={\rm ess}\, \inf_{X\in C_{\phi}}\prec X,a\succ_{t,T}\,\leq\, {\rm ess}\, \inf_{X\in C_{\bar{\phi}}}\prec X,a\succ_{t,T}\,=\bar{\phi}^{\#}\left(a\right).
\]
Therefore, for all $a$ $\in$ $D_{t,T}$ the following inequality
holds

\[
\prec A\left(\frac{1}{n}\sum_{i=1}^{n}\tilde{X}^{i}\right),a\succ_{t,T}+\phi^{\#}\left(a\right)\:\leq\:\prec\frac{1}{n}\sum_{i=1}^{n}X^{i},a\succ_{t,T}+\bar{\phi}^{\#}\left(a\right),
\]
and the conclusion  follows. 
\end{proof}
\bibliographystyle{abbrv}
\bibliographystyle{apalike}

\bibliographystyle{agsm}
\bibliography{HH-MM-20152303}

\begin{thebibliography}{10}

\bibitem{artzner1997a1}
P.~Artzner, F.~Delbaen, J.~M. Eber, and D.~Heath.
\newblock Thinking coherently.
\newblock {\em Risk}, 11:68--71, 1997.

\bibitem{artzner1999coherent}
P.~Artzner, F.~Delbaen, J.-M. Eber, and D.~Heath.
\newblock Coherent measures of risk.
\newblock {\em Mathematical Finance}, 9(3):203--228, 1999.

\bibitem{burgert2006consistent}
C.~Burgert and L.~R{\"u}schendorf.
\newblock Consistent risk measures for portfolio vectors.
\newblock {\em Insurance: Mathematics and Economics}, 38(2):289--297, 2006.

\bibitem{cerreia2011risk}
S.~Cerreia-Vioglio, F.~Maccheroni, M.~Marinacci, and L.~Montrucchio.
\newblock Risk measures: rationality and diversification.
\newblock {\em Mathematical Finance}, 21(4):743--774, 2011.

\bibitem{cheridito2006dynamic}
P.~Cheridito, F.~Delbaen, M.~Kupper, et~al.
\newblock Dynamic monetary risk measures for bounded discrete-time processes.
\newblock {\em Electronic Journal of Probability}, 11(3):57--106, 2006.

\bibitem{cheridito2011composition}
P.~Cheridito and M.~Kupper.
\newblock Composition of time-consistent dynamic monetary risk measures in
  discrete time.
\newblock {\em International Journal of Theoretical and Applied Finance},
  14(01):137--162, 2011.

\bibitem{danielsson2001academic}
J.~Danielsson, P.~Embrechts, C.~Goodhart, C.~Keating, F.~Muennich, O.~Renault,
  and H.~S. Shin.
\newblock An academic response to basel ii, 2001.

\bibitem{ekeland2012comonotonic}
I.~Ekeland, A.~Galichon, and M.~Henry.
\newblock Comonotonic measures of multivariate risks.
\newblock {\em Mathematical Finance}, 22(1):109--132, 2012.

\bibitem{embrechts2013model}
P.~Embrechts, G.~Puccetti, and L.~R{\"u}schendorf.
\newblock Model uncertainty and var aggregation.
\newblock {\em Journal of Banking \& Finance}, 37(8):2750--2764, 2013.

\bibitem{filipovic2012approaches}
D.~Filipovic, M.~Kupper, and N.~Vogelpoth.
\newblock Approaches to conditional risk.
\newblock {\em SIAM Journal on Financial Mathematics}, 3(1):402--432, 2012.

\bibitem{follmer2011stochastic}
H.~F{\"o}llmer and A.~Schied.
\newblock {\em Stochastic finance: an introduction in discrete time}.
\newblock Walter de Gruyter, 2011.

\bibitem{rachev1998mass}
S.~T. Rachev and L.~R{\"u}schendorf.
\newblock {\em Mass Transportation Problems: Volume I: Theory}, volume~1.
\newblock Springer Science \& Business Media, 1998.

\bibitem{ruschendorf2006law}
L.~R{\"u}schendorf.
\newblock Law invariant convex risk measures for portfolio vectors.
\newblock {\em Statistics \& Decisions}, 24(1/2006):97--108, 2006.

\bibitem{ruschendorf2012worst}
L.~R{\"u}schendorf.
\newblock Worst case portfolio vectors and diversification effects.
\newblock {\em Finance and Stochastics}, 16(1):155--175, 2012.

\bibitem{ruschendorf1990characterization}
L.~R{\"u}schendorf and S.~Rachev.
\newblock A characterization of random variables with minimum l 2-distance.
\newblock {\em Journal of Multivariate Analysis}, 32(1):48--54, 1990.

\bibitem{ruschendorf2002n}
L.~R{\"u}schendorf and L.~Uckelmann.
\newblock On the n-coupling problem.
\newblock {\em Journal of multivariate analysis}, 81(2):242--258, 2002.

\end{thebibliography}

\end{document}